\newtheorem{theorem}{Theorem}
\newtheorem{sat}[theorem]{Proposition}
\newtheorem{lem}[theorem]{Lemma}
\theoremstyle{remark}
\newtheorem*{bem}{Remark}
\numberwithin{theorem}{section}
\numberwithin{equation}{section}
\newcommand{\R}{\mathbb{R}}
\newcommand{\C}{\mathbb{C}}
\newcommand{\N}{\mathbb{N}}
\newcommand{\Z}{\mathbb{Z}}
\newcommand{\id}{Id}
\def\dim{\operatorname{dim}}
\newcommand{\tr}{tr}
\renewcommand{\part}[2]{\frac{\partial #1}{\partial #2}}
\begin{document}

\title{On Conformal Powers of the Dirac Operator on Einstein Manifolds}

\author[Matthias Fischmann, Christian Krattenthaler and Petr Somberg]
{Matthias Fischmann$^1$, Christian Krattenthaler$^2$ and Petr Somberg$^1$}

\address{
$^1$ E. \v{C}ech Institute, Mathematical Institute of Charles University,
Sokolovsk\'a 83, Praha 8 - Karl\'{\i}n, Czech Republic.
\newline\indent
$^2$ Fakult\"at f\"ur Mathematik, Universit\"at Wien, 
Oskar-Morgenstern-Platz 1, A-1090 Vienna, Austria.
}

\keywords{Conformal and semi-Riemannian {\it Spin}-geometry, 
                conformal powers of the Dirac operator, 
                Einstein manifolds, 
                higher variations of the Dirac operator, 
                Hahn polynomials}
\subjclass[2010]{53C27, 34L40, 53A30, 33C20}

\thanks{$^1$
Research partially supported by
grant GA~CR~P201/12/G028. \newline\indent
$^2$
Research partially supported by the Austrian
Science Foundation FWF, grants Z130-N13 and S50-N15,
the latter in the framework of the Special Research Program
``Algorithmic and Enumerative Combinatorics."
}

\date{}

\begin{abstract}
We determine the structure of conformal powers of the Dirac operator on Einstein 
{\it Spin}-manifolds 
          in terms of the product formula for shifted Dirac operators. The result is based on the techniques 
          of higher variations for the Dirac operator on Einstein manifolds and spectral analysis of the 
          Dirac operator on 
          the associated Poincar\'e-Einstein metric, and relies on   
          combinatorial recurrence identities related to the dual Hahn polynomials.
\end{abstract}
\maketitle

\section{Introduction}

  Conformally covariant operators like the Yamabe-Laplace or the Dirac operator are of central 
  interest in 
  geometric analysis on manifolds. The Yamabe-Laplace operator is a representative 
  of conformally covariant operators termed GJMS operators, 
  cf.~\cite{GJMS, GZ, GoverPeterson}, 
  and this is analogous for the Dirac operator, cf.~\cite{HS,GMP1,Fischmann}. 
  Their original construction is based on the ambient metric, 
  or, equivalently, on the associated  
  Poincar\'e-Einstein metric introduced by Fefferman and Graham \cite{FG,FG3}. 
  
  In the case of GJMS operators, 
  it is shown in \cite{GoverHirachi} that in even dimensions $n$ there exists in general no conformal modification of 
  the $k$-th power of the Laplace operator for $k>\frac n2$. In the case of 
  conformal powers of the Dirac operator on general {\it Spin}-manifolds, 
  all known constructions break down for even dimensions $n$ if the order of the operator 
  exceeds the dimension. The effect 
  of non-existence for higher order conformal powers of 
  the Laplace and Dirac operators does not apply for 
  certain classes of manifolds, for example flat manifolds \cite{Slovak} or Einstein manifolds \cite{Gover}. 

  The proper understanding of 
  the internal structure 
  of conformal powers of the Laplace and Dirac operators is a difficult task, 
  see the progress for GJMS operators in \cite{Juhl1}. In particular, there exists a sequence of second order 
  differential 
  operators such that all GJMS operators are polynomials in this collection of
  second order differential 
  operators, and vice versa. Such a structure, in terms of first order differential operators, 
  is also available for low order examples 
  of conformal powers of the Dirac operator,
  cf.~\cite[Chapter~$6$]{Fischmann}.
  Explicit formulas are available on flat manifolds, 
  where they are just powers of 
  the Laplace or Dirac operators, on the spheres \cite{Branson4, ES}, where they factor into a product of 
  shifted Laplace or Dirac operators,
  or on Einstein manifolds, where the GJMS operators factor into a product of 
  shifted Laplace operators \cite{Gover, FG3}. 
  
  The main aim of our article is to complete these results for conformal powers of the Dirac operator on 
  Einstein {\it Spin}-manifolds. The result is based on 
  the proper understanding of higher variations 
  of the Dirac operator on Einstein manifolds and
  the spectral analysis of the Dirac operator on 
  the associated Poincar\'e-Einstein metric.
  The derivations of 
  specific formulas rely on combinatorial recurrence identities related to dual Hahn polynomials.

  The product structure, or factorization, 
  of conformal powers of the Laplace and Dirac operators
  is applied in theoretical physics, cf.~\cite{Dowker, Dowker1}, 
  to compute conformal and multiplicative anomalies of functional determinants in the context of the 
  $\mathrm{AdS/CFT}$ correspondence.

  \vspace{5pt}
  The paper is organized as follows. 
  In Section~\ref{Variations}, we discuss variations to all orders of
  the Dirac operator on 
  semi-Riemannian Einstein {\it Spin}-manifolds with respect to the $1$-parameter family of metrics arising from the 
  Poincar\'e-Einstein metric, cf.~Theorem~\ref{DiracVariation}. 
  In Section~\ref{HyperStuff}, we briefly recall the 
  dual Hahn polynomials, which 
  form a special class of generalized hypergeometric functions. The solution of certain recurrence relation, derived in 
  Section~\ref{ProductStructure}, has an interpretation in terms of dual Hahn polynomials. 
  Section~\ref{ProductStructure} contains 
  our main theorem, Theorem~\ref{ProductOfConformalPowers}.
  Its proof is based on 
  a recurrence relation, deduced from the construction of conformal powers on the Dirac operator of 
  a semi-Riemannian Einstein {\it Spin}-manifold via 
  the associated Poincar\'e-Einstein metric, cf.~Proposition~\ref{RecRel}. The final 
  section collects several statements and applications, aiming at the description of 
  a still largely conjectural holographic deformation of the Dirac operator.


\section{Semi-Riemannian {\it Spin}-geometry, Clifford algebras, and Poincar\'e-Einstein spaces}

In the present section we review conventions and notation related to semi-Riemannian 
{\it Spin}-geometry and 
the Poincar\'e-Einstein metric construction used throughout the article.

Let $(M,h)$ be a semi-Riemannian {\it Spin}-manifold of signature $(p,q)$ and dimension $n=p+q$. 
Then any orthonormal frame $\{e_i\}_{i=1}^n$ fulfills 
$h(e_i,e_j)=\varepsilon_i\delta_{ij}$, where $\varepsilon_i=-1$ 
for $1\leq i\leq p$ and $\varepsilon_i=1$ for $p+1\leq i\leq n$.

The Clifford algebra of $(\R^n,\langle\cdot,\cdot\rangle_{p,q})$, 
denoted by $Cl(\R^{p,q})$, is a quotient of the tensor algebra of $\R^n$ 
by 
the two sided ideal generated by 
the relations 
$x\otimes y+y\otimes x=-2\langle x,y\rangle_{p,q}$ for all $x,y\in\R^n$. 
In the even case $n=2m$, 
the complexified Clifford algebra $Cl_{\mathbb C}(\R^{p,q})$ 
has a unique irreducible representation up to isomorphism, 
whereas in the odd case $n=2m+1$ it 
has two non-equivalent irreducible representations on $\Delta_{n}:=\C^{2^m}$,
again unique up to isomorphism. 
The restriction of this representation to the spin group 
$Spin(p,q)$, regarded as a subgroup of the group of units $Cl^*(\R^{p,q})$, 
is denoted by $\kappa_{n}$. 

The choice of a {\it Spin}-structure $(Q,f)$ on $(M,h)$ 
provides an associated spinor bundle $S(M,h):=Q\times_{(Spin_0(p,q),\kappa_{n})}\Delta_{n}$, 
where $Spin_0(p,q)$ denotes the connected component of the spin group containing the identity element.
(We could work with 
the full spin group as well, because we do not need the existence of 
a scalar product). 
Then the Levi-Civita connection 
$\nabla^{h}$ on $(M,h)$ lifts to a covariant derivative 
$\nabla^{h,S}$ on the spinor bundle. The associated Dirac operator is denoted by $\slashed{D}$.

Let $\widehat{h}=e^{2\sigma}h$ be a metric conformally related to $h$, $\sigma\in \mathcal{C}^\infty(M)$. 
The spinor bundles for $\widehat{h}$ and $h$ can be identified by a vector bundle isomorphism 
$F_\sigma:S(M,h)\to S(M,\widehat{h})$, and the Dirac operator satisfies 
the conformal transformation law
\begin{align*}
  \widehat{\slashed{D}}\big(e^{\frac{1-n}{2}\sigma}\widehat{\psi}\,\big)
     =e^{-\frac{1+n}{2}\sigma}\widehat{\slashed{D}\psi},
\end{align*}
for all smooth sections $\psi\in\Gamma\big(S(M,h)\big)$, and $\widehat{\cdot}$ denotes the evaluation with 
respect to $\widehat{h}$. Conformal odd powers of the Dirac operator 
were constructed in \cite{HS, GMP1, Fischmann}, and are denoted by 
$\mathcal{D}_{2N+1}=\slashed{D}^{2N+1}+\text{LOT}$, for $N\in\N_0$ ($N<\frac n2$ for even $n$).
Here, LOT stands for ``lower order terms." They satisfy 
\begin{align*}
  \widehat{\mathcal{D}}_{2N+1}\big(e^{\frac{2N+1-n}{2}\sigma}\widehat{\psi}\,\big)
     =e^{-\frac{2N+1+n}{2}\sigma}\widehat{\mathcal{D}_{2N+1}\psi}
\end{align*}
for all smooth function $\sigma\in\mathcal{C}^\infty(M)$ and sections $\psi\in\Gamma\big(S(M,h)\big)$. 

As for the Poincar\'e-Einstein metric construction we refer to \cite{FG3}. 
The Poincar\'e-Einstein metric 
associated with an $n$-dimensional semi-Riemannian manifold $(M,h)$, 
$n\geq 3$, is $X:=M\times (0,\varepsilon)$, $\varepsilon\in\R_+$, equipped with the metric
\begin{align*}
  g_+=r^{-2}(dr^2+h_r),
\end{align*}
for a $1$-parameter family of metrics $h_r$ on $M$, $h_0=h$. The requirement of 
the Einstein condition on $g_+$ for $n$ odd, 
\begin{align*}
  Ric(g_+)+ng_+=O(r^\infty),
\end{align*}
uniquely determines the family $h_r$, while for $n$ even the conditions 
\begin{align*}
  Ric(g_+)+ng_+=O(r^{n-2}),\quad \tr(Ric(g_+)+ng_+)=O(r^{n-1}),
\end{align*}
uniquely determine the coefficients $h_{(2)},\ldots,h_{(n-2)}$, $\tilde{h}_{(n)}$ 
and the trace of $h_{(n)}$ in the formal power series
\begin{align*}
  h_r=h+r^2h_{(2)}+
\dots+r^{n-2}h_{(n-2)}+r^n(h_{(n)}+\tilde{h}_{(n)}\log r )+
\cdots . 
\end{align*}
For example, we have 
\begin{align*}
  h_{(2)}=-P,\quad h_{(4)}=\tfrac 14
\left(P^2-\tfrac{B}{n-4}\right),
\end{align*}
where $P$ is the Schouten tensor and $B$ is the Bach tensor 
associated with $h$. 

All constructions in the present article, based on the Poincar\'e-Einstein metric, 
depend for even $n$ on the coefficients $h_{(2)},\ldots,h_{(n-2)}$ and $\tr(h_{(n)})$ only. 
Choosing different representatives $h,\widehat{h}\in [h]$ in the conformal class leads to Poincar\'e-Einstein 
metrics $g^1_+$ and $g^2_+$ related by a diffeomorphism $\Phi:U_1\subset X\to U_2\subset X$, 
where both $U_i$, $i=1,2$, contain $M\times\{0\}$, $\Phi|_{M}=\id_M$, and $g_+^1=\Phi^*g^2_+$ (up to a finite 
order in $r$, for even $n$).   
     

\section{Variation of the Dirac operator induced by the Poincar\'e-Einstein metric}\label{Variations}

In this section we give a complete description 
of the variations of the Dirac operator, 
associated with the $1$-parameter family $h_r$ induced by the Poincar\'e-Einstein metric 
$g_+$, assuming that $(M,h_0=h)$ is Einstein. 

For a general $1$-parameter family of metrics $h_r$ on a Riemannian 
{\it Spin}-manifold, the first variation of the Dirac operator was 
discussed in \cite{BourguignonGauduchon}, which we will 
adapt and make explicit for the $1$-parameter family of metrics $h_r$ induced by 
the Poincar\'e-Einstein metric.

Motivated by a proof of the fundamental theorem of hypersurface theory and a new way 
to identify spinors for different metrics, \cite{BGM} introduced the technique of 
generalized cylinders to derive the first order variation formula for the Dirac 
operator with respect to a deformation of the underlying metric. 

In general, the topic of 
higher metric variations for the Dirac operator was not discussed in the literature. In case $(M,h)$ is 
Einstein, the associated Poincar\'e-Einstein metric takes a very simple form, cf.~Equation~\eqref{eq:PEMetric}. 
This allows
for a complete description of variation formulas of general order for the 
Dirac operator 
associated with $h_r$. The higher variation formulas are 
used in Section~\ref{ProductStructure}
to make the construction of conformal powers of the Dirac operator 
very explicit, ending in a product structure for $\mathcal{D}_{2N+1}$, $N\in\N_0$. 
 
Throughout the article, we use the standard notation in semi-Riemannian geometry, e.g., 
$Ric,\tau$ 
are the Ricci tensor and its scalar 
curvature, respectively. 

Let $(M,h)$ be a semi-Riemannian Einstein manifold of dimension $n$ with normalized Einstein metric $h$, 
\begin{align*}
    Ric(h)=2\lambda(n-1)h, \quad \lambda\in\R .
\end{align*}
This implies $P=\frac Jn h$, where $J=\frac{\tau}{2(n-1)}$ 
is the normalized scalar curvature and $P=\frac{1}{n-2}(Ric-Jh)$ is the Schouten tensor.
The associated Poincar\'e-Einstein metric $g_+=r^{-2}(dr^2+h_r)$ on $X$ 
is determined by the $1$-parameter family of metrics $h_r$ on $M$,
\begin{align} 
    h_r=h-r^2\tfrac Jn h+r^4\left(\tfrac{J}{2n}\right)^2h=\left(1-\tfrac{J}{2n}r^2\right)^2h,
      \quad h_0=h.\label{eq:PEMetric}
\end{align}
For $r\in{\mathbb R}_+$ small enough we consider a point-wise isomorphism 
$f_r: T_xM\to T_xM$, $x\in M$, relating $h=h_0$ and 
$h_r$ via 
\begin{align*}
    Y\mapsto f_rY:=\left(1-\tfrac{J}{2n}r^2\right)^{-1}Y,\quad
\text{for all } Y\in\Gamma(TM),
\end{align*}
characterized by $h(Y,U)=h_r(f_rY,f_rU)$ for all $Y,U\in\Gamma(TM)$ and $f_0=\id_{TM}$.
 
Let us introduce the Levi-Civita covariant derivatives on $TM$ corresponding to $h$ and $h_r$:
\begin{align*}
    \nabla^{h}, \nabla^{h_r} : \Gamma(TM)&\to\Gamma(T^*M\otimes TM),
\end{align*}
and 
\begin{align}
    \nabla^{h,h_r}:\Gamma(TM)&\to \Gamma(T^*M\otimes TM)\notag\\
     Y&\mapsto \big\{U\to \nabla^{h,h_r}_YU:=(f_r^{-1}\circ\nabla^{h_r}_{Y}\circ f_r)U\big\}.\label{eq:R-CovDer}
\end{align}
The covariant derivatives $\nabla^{h}, \nabla^{h_r}, \nabla^{h,h_r}$ extend by 
the Leibniz rule and
the spin representation to tensor-spinor fields. For example one has 
\begin{align*}
    (\nabla^{h}_U f)(Y)=\nabla^{h}_U(fY)-f(\nabla^{h}_UY),
\end{align*}   
for all $f\in End(TM)$, and $U,Y\in\Gamma(TM)$. 
\begin{lem}\label{torsion}
  The covariant derivative $\nabla^{h,h_r}$ is
  metric for $h$, and its torsion $T^r$ satisfies
  \begin{equation}
      T^r(U,Y)=f_r^{-1}\big((\nabla^{h_r}_U f_r)(Y)-(\nabla^{h_r}_Yf_r)(U)\big),\quad 
      \text{for all }U,Y\in\Gamma(TM).
  \end{equation}
\end{lem}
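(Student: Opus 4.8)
The plan is to verify the two assertions of Lemma~\ref{torsion} directly from the definition of $\nabla^{h,h_r}$ in \eqref{eq:R-CovDer}, exploiting that $\nabla^{h_r}$ is the Levi-Civita connection of $h_r$ and that $f_r$ is the pointwise isomorphism intertwining $h$ and $h_r$ via $h(Y,U)=h_r(f_rY,f_rU)$.

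For the metric property, I would compute $U\,h(Y,Z)$ for $Y,Z\in\Gamma(TM)$ and aim to show it equals $h(\nabla^{h,h_r}_UY,Z)+h(Y,\nabla^{h,h_r}_UZ)$. The key step is to transport everything to the $h_r$ side: using $h(Y,Z)=h_r(f_rY,f_rZ)$ and the fact that $\nabla^{h_r}$ is metric for $h_r$, one gets $U\,h(Y,Z)=U\,h_r(f_rY,f_rZ)=h_r(\nabla^{h_r}_U(f_rY),f_rZ)+h_r(f_rY,\nabla^{h_r}_U(f_rZ))$. Then I would rewrite $\nabla^{h_r}_U(f_rY)=f_r(f_r^{-1}\nabla^{h_r}_U f_r)Y=f_r(\nabla^{h,h_r}_UY)$ and similarly for $Z$, and finally apply $h_r(f_r\,\cdot\,,f_r\,\cdot\,)=h(\cdot,\cdot)$ to collapse back to $h(\nabla^{h,h_r}_UY,Z)+h(Y,\nabla^{h,h_r}_UZ)$, which is exactly the metric condition.

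For the torsion, I would start from $T^r(U,Y)=\nabla^{h,h_r}_UY-\nabla^{h,h_r}_YU-[U,Y]$ and substitute the definition $\nabla^{h,h_r}_UY=f_r^{-1}\nabla^{h_r}_U(f_rY)$. Expanding $\nabla^{h_r}_U(f_rY)=(\nabla^{h_r}_U f_r)(Y)+f_r(\nabla^{h_r}_UY)$ via the Leibniz rule for endomorphism-valued derivatives stated in the excerpt, and doing the same for the $\nabla^{h,h_r}_YU$ term, I would collect the pieces. The contributions $f_r^{-1}\bigl(f_r\nabla^{h_r}_UY-f_r\nabla^{h_r}_YU\bigr)=\nabla^{h_r}_UY-\nabla^{h_r}_YU=[U,Y]$, since $\nabla^{h_r}$ is torsion-free, cancels the $-[U,Y]$ term; what survives is precisely $f_r^{-1}\bigl((\nabla^{h_r}_Uf_r)(Y)-(\nabla^{h_r}_Yf_r)(U)\bigr)$, as claimed.

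The only mild subtlety, rather than a genuine obstacle, is bookkeeping with the extension of $\nabla^{h_r}$ to the endomorphism $f_r\in\End(TM)$ and making sure the Leibniz identity $(\nabla^{h_r}_Uf_r)(Y)=\nabla^{h_r}_U(f_rY)-f_r(\nabla^{h_r}_UY)$ is applied consistently; once that is in place both verifications are short algebraic manipulations and no deeper geometric input is needed.
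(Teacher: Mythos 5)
Your proposal is correct and follows essentially the same route as the paper: both verifications rest on transporting expressions to the $h_r$ side via $h(\cdot,\cdot)=h_r(f_r\,\cdot\,,f_r\,\cdot\,)$, the $h_r$-metricity and torsion-freeness of $\nabla^{h_r}$, and the Leibniz rule $(\nabla^{h_r}_Uf_r)(Y)=\nabla^{h_r}_U(f_rY)-f_r(\nabla^{h_r}_UY)$. The paper merely organizes the metricity check as showing $(\nabla^{h,h_r}_Yh)(U,Z)=(\nabla^{h_r}_Yh_r)(f_rU,f_rZ)=0$ and invokes torsion-freeness before the Leibniz expansion rather than after, which are only cosmetic differences in ordering.
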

\begin{proof}
    Let $Y,U,Z\in\Gamma(TM)$. First we show the $h$-metricity of $\nabla^{h,h_r}$:
    \begin{align*}
      (\nabla^{h,h_r}_Yh)(U,Z)
      &=Y\big(h(U,Z)\big)-h(\nabla^{h,h_r}_YU,Z)-h(\nabla^{h,h_r}_YZ,U)\\
      &=Y\big(h_r(f_rU,f_rZ)\big)-h(f_r^{-1}\nabla^{h_r}_Y(f_rU),Z)
       -h(f_r^{-1}\nabla^{h_r}_Y(f_rZ),U)\\
      &=Y\big(h_r(f_rU,f_rZ)\big)-h_r(\nabla^{h_r}_Y(f_rU),f_rZ) 
       -h(\nabla^{h_r}_Y(f_rZ),f_rU) \\
      &=\nabla^{h_r}_Yh_r(f_rU,f_rZ)=0.
    \end{align*}
    The second statement follows from
    \begin{align*}
      T^r(U,Y)&=\nabla^{h,h_r}_UY-\nabla^{h,h_r}_YU-[U,Y]\\
      &=f_r^{-1}\big(\nabla^{h_r}_Uf_rY-\nabla^{h_r}_Yf_rU\big)-\nabla^{h_r}_UY+\nabla^{h_r}_YU\\
      &=f_r^{-1}\big(\nabla^{h_r}_Uf_rY-f_r(\nabla^{h_r}_UY)-\nabla^{h_r}_Yf_rU+f_r(\nabla^{h_r}_YU)\big)\\
      &=f_r^{-1}\big((\nabla^{h_r}_Uf_r)(Y)-(\nabla^{h_r}_Yf_r)(U)\big).
    \end{align*}
\end{proof}

It is well known that two $h$-metric covariant derivatives on $(M,h)$ differ by their torsions. 
\begin{lem}\label{ConnectionVergleich}
   We have 
   \begin{equation*}
     h(\nabla^{h,h_r}_UY,Z)=h(\nabla^{h}_UY,Z)\nonumber\\
     +\tfrac 12\big[h(T^r(Z,Y),U)-h(T^r(Y,U),Z)-h(T^r(U,Z),Y) \big]
   \end{equation*}
   for all $U,Y,Z\in\Gamma(TM)$.
\end{lem}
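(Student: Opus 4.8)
The plan is to use the general principle, recalled in the sentence just before the statement, that two $h$-metric covariant derivatives are determined by their torsions through their difference tensor. First I would introduce
\[
  A(U,Y):=\nabla^{h,h_r}_U Y-\nabla^{h}_U Y,
\]
which is $\mathcal{C}^\infty(M)$-bilinear in $U$ and $Y$, being a difference of two covariant derivatives, hence a genuine tensor field. The claim then reduces to expressing $h(A(U,Y),Z)$ in terms of $T^r$.

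Two structural identities drive the argument. Since $\nabla^{h,h_r}$ is $h$-metric by Lemma~\ref{torsion} and $\nabla^{h}$ is the Levi-Civita connection of $h$, both are metric for $h$; subtracting the two metricity identities $U(h(Y,Z))=h(\nabla_U Y,Z)+h(Y,\nabla_U Z)$ yields the skew-symmetry
\[
  h(A(U,Y),Z)=-h(A(U,Z),Y).
\]
On the other hand, because $\nabla^{h}$ is torsion-free, the definition of $T^r$ collapses to the antisymmetrization of $A$,
\[
  T^r(U,Y)=A(U,Y)-A(Y,U).
\]

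Setting $a(U,Y,Z):=h(A(U,Y),Z)$ and $t(U,Y,Z):=h(T^r(U,Y),Z)$, these identities read $a(U,Y,Z)=-a(U,Z,Y)$ and $a(U,Y,Z)-a(Y,U,Z)=t(U,Y,Z)$. I would then run the standard Koszul-type computation: write the second identity for the three cyclic permutations of $(U,Y,Z)$, use the skew-symmetry in the last two slots to rewrite each subtracted term with a fixed orientation, and solve the resulting $3\times 3$ linear system. The outcome is
\[
  a(U,Y,Z)=\tfrac12\big[t(U,Y,Z)-t(Y,Z,U)+t(Z,U,Y)\big].
\]
Substituting $\nabla^{h,h_r}_U Y=\nabla^{h}_U Y+A(U,Y)$ and reorienting each $t$-term through the antisymmetry $T^r(V,W)=-T^r(W,V)$ produces exactly the three $T^r$-expressions in the asserted formula.

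There is no genuine obstacle here: the computation is purely algebraic once $h$-metricity (from Lemma~\ref{torsion}) and the torsion-freeness of the Levi-Civita connection are in hand. The only point requiring care is the sign bookkeeping in the last step, where each of the three torsion terms must be consistently reoriented via the antisymmetry of $T^r$ so that the placement of the arguments $U,Y,Z$ matches the precise form $h(T^r(Z,Y),U)-h(T^r(Y,U),Z)-h(T^r(U,Z),Y)$ in the statement.
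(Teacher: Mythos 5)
Your proposal is correct and follows essentially the same route as the paper: both introduce the difference tensor $A(U,Y)=\nabla^{h,h_r}_UY-\nabla^{h}_UY$, derive its skew-symmetry in the last two slots from the $h$-metricity of both connections, express $T^r$ as the antisymmetrization of $A$ using torsion-freeness of $\nabla^h$, and then combine these algebraically (your explicit Koszul-type cyclic computation is just a spelled-out version of the final step the paper states as immediate).
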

\begin{proof}
    Let $U,Y,Z\in\Gamma(TM)$. Any two covariant derivatives differ by a tensor field 
    $\omega\in\Gamma(T^* M\otimes TM\otimes T^* M)$, 
    i.e., $\nabla^{h,h_r}_UY=\nabla^{h}_UY+\omega(U)Y$. 
    Since both covariant derivatives are metric with respect to $h$, we get
    \begin{align}
      0&=\nabla^{h,h_r}_Uh(Y,Z)-\nabla^{h}_Uh(Y,Z)
        =-\big[h(\omega(U)Y,Z)+h(\omega(U)Z,Y)\big].\label{eq:Endo}
    \end{align} 
    Since $\nabla^h$ is 
    torsion-free, we have
    \begin{align*}
      T^r(U,Y)&=\nabla^{h,h_r}_UY-\nabla^{h,h_r}_YU-[U,Y]\\
      &=\nabla^{h,h_r}_UY-\nabla^{h,h_r}_YU-\nabla^{h}_UY+\nabla^{h}_YU\\
      &=\omega(U)Y-\omega(Y)U.
    \end{align*}
    Using Equation~\eqref{eq:Endo}, we see that this implies
    \begin{align*}
      h(T^r(Z,Y),U)&-h(T^r(Y,U),Z)-h(T^r(U,Z),Y)\\
      &=2h(\omega(U)Y,Z)=2h(\nabla^{h,h_r}_UY-\nabla^{h}_UY,Z),
    \end{align*}
    and the proof is complete.
\end{proof}

Any $h$-metric covariant derivative induces a covariant derivative on the 
spinor bundle $S(M,h)$, hence $\nabla^h$, $\nabla^{h_r}$ and $\nabla^{h,h_r}$ induce
\begin{align}
    \nabla^{h,S}:\quad &\Gamma\big(S(M,h)\big)\to\Gamma\big(T^*M\otimes S(M,h)\big),\nonumber \\
    \nabla^{h_r,S}:\quad &\Gamma\big(S(M,h_r)\big)\to\Gamma\big(T^*M\otimes S(M,h_r)\big),\nonumber \\
    \nabla^{r,S}:\quad &\Gamma\big(S(M,h)\big)\to\Gamma\big(T^*M\otimes S(M,h)\big).
\end{align}
Note that the 
last of the above covariant derivatives equals 
$\nabla^{h,h_r,S}$, but we will 
use the abbreviation $\nabla^{r,S}$. It follows from Lemma~\ref{ConnectionVergleich} that locally
\begin{align}
    \nabla^{r,S}_{s_i}\psi&=s_i(\psi)+\tfrac 12\sum_{j < k}\varepsilon_j\varepsilon_k
        h(\nabla^{h,h_r}_{s_i}s_j,s_k)s_j\cdot s_k\cdot\psi\notag\\
    &=\nabla^{h,S}_{s_i}\psi+\tfrac 18\sum_{j\neq k}\varepsilon_j\varepsilon_k 
        T^{r,\sigma}_{ijk}s_j\cdot s_k\cdot\psi,\label{eq:SpinorVergleich}
\end{align}
where $\psi\in\Gamma\big(S(M,h)\big)$, $\{s_i\}_{i=1}^n$ is an $h$-orthonormal 
frame, $T^{r,\sigma}_{ijk}:=(1-\sigma-\sigma^2)T^r_{kji}$ with $\sigma T^r_{ijk}:=T^r_{jki}$ and 
$T^r(U,Y,Z):=h(T^r(U,Y),Z)$ for $U,Y,Z\in\Gamma(TM)$. 
The isometry $f_r:T_xM\to T_xM$, $x\in M$, pulls-back $h$ to $h_r$ and lifts to a spinor bundle isomorphism 
\begin{align*}
   \beta_r: S(M,h)\to S(M,h_r).
\end{align*}
It preserves the base points on $M$ and satisfies 
$\beta_r(U\cdot\psi)=f_r(U)\cdot\beta_r(\psi)$ for all $U\in\Gamma(TM),\psi\in\Gamma\big(S(M,h)\big)$. 
\begin{lem}\label{SpinCovariantDerivatives}
    Let $\psi\in\Gamma\big(S(M,h)\big), Y\in\Gamma(TM)$. Then 
   \begin{align}
      \nabla^{r,S}_Y\psi=(\beta_r^{-1}\circ\nabla^{h_r,S}_Y\circ\beta_r)\psi.
   \end{align}
\end{lem}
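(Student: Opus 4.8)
The plan is to introduce the conjugated operator $\widetilde\nabla_Y:=\beta_r^{-1}\circ\nabla^{h_r,S}_Y\circ\beta_r$ acting on $\Gamma\big(S(M,h)\big)$ and to show that it coincides with $\nabla^{r,S}$. Since $\beta_r$ covers the identity on $M$ and is $\mathcal{C}^\infty(M)$-linear, conjugation by $\beta_r$ turns the covariant derivative $\nabla^{h_r,S}$ into a covariant derivative $\widetilde\nabla$ on $S(M,h)$ (it is $\R$-linear and satisfies the Leibniz rule over $\mathcal{C}^\infty(M)$). Both $\widetilde\nabla$ and $\nabla^{r,S}$ are therefore spinorial covariant derivatives on the same bundle, and it suffices to identify them by the uniqueness of the spin lift of a fixed metric connection on $TM$.

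First I would establish that $\widetilde\nabla$ is compatible with Clifford multiplication through the tangent connection $\nabla^{h,h_r}$ of \eqref{eq:R-CovDer}. For $U\in\Gamma(TM)$ and $\psi\in\Gamma\big(S(M,h)\big)$ I would use the intertwining $\beta_r(U\cdot\psi)=f_r(U)\cdot\beta_r(\psi)$ to write $\widetilde\nabla_Y(U\cdot\psi)=\beta_r^{-1}\nabla^{h_r,S}_Y\big(f_rU\cdot\beta_r\psi\big)$, and then apply the Leibniz rule for $\nabla^{h_r,S}$ together with the relation $\beta_r^{-1}(W\cdot\beta_r\psi)=f_r^{-1}(W)\cdot\psi$. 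This yields
\begin{align*}
   \widetilde\nabla_Y(U\cdot\psi)
     =\big(f_r^{-1}\nabla^{h_r}_Y(f_rU)\big)\cdot\psi+U\cdot\widetilde\nabla_Y\psi
     =(\nabla^{h,h_r}_YU)\cdot\psi+U\cdot\widetilde\nabla_Y\psi,
\end{align*}
where the last equality is exactly the definition of $\nabla^{h,h_r}$. By construction \eqref{eq:SpinorVergleich}, $\nabla^{r,S}$ is the spin lift of $\nabla^{h,h_r}$ and satisfies the same Clifford--Leibniz identity, so the difference $D_Y:=\widetilde\nabla_Y-\nabla^{r,S}_Y$ is a $\mathcal{C}^\infty(M)$-linear bundle endomorphism of $S(M,h)$ that commutes with Clifford multiplication by every $U\in\Gamma(TM)$.

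It remains to show $D_Y\equiv 0$, which I expect to be the main point. Since $\Delta_n$ is an irreducible $Cl_{\mathbb C}(\R^{p,q})$-module, Schur's lemma forces $D_Y$ to be a scalar endomorphism $c(Y)\,\id$. To see that $c(Y)=0$ I would use that $\beta_r$ is the lift of the isometry $f_r$ to the spin group, so that conjugation by $\beta_r$ sends the principal spin connection underlying $\nabla^{h_r,S}$ to a principal spin connection; consequently the local connection one-form of $\widetilde\nabla$, like that of $\nabla^{r,S}$ in \eqref{eq:SpinorVergleich}, takes values in $\kappa_n(\mathfrak{spin}(p,q))$, i.e.\ in the span of the $\kappa_n(s_j\cdot s_k)$ with $j\neq k$. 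Each such generator is trace-free, because $s_j\cdot s_k$ anticommutes with the invertible Clifford element $s_j$; hence $D_Y$ is trace-free, and a trace-free scalar endomorphism of the nonzero space $\Delta_n$ must vanish. This gives $\widetilde\nabla=\nabla^{r,S}$ and proves the lemma. The delicate point is precisely this reduction to a scalar together with its vanishing: one must ensure that the conjugated derivative is a genuine spin connection and not merely a Clifford-compatible one, which is guaranteed because $\beta_r$ arises as a spin-group lift of $f_r$ rather than an arbitrary Clifford-module isomorphism.
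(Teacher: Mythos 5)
Your proof is correct, but it takes a genuinely different route from the paper's. The paper argues by a short direct computation in an $h$-orthonormal frame $\{s_i\}$: starting from the local formula \eqref{eq:SpinorVergleich}, it rewrites the connection coefficients $h(\nabla^{h,h_r}_Y s_j,s_k)$ as $h_r(\nabla^{h_r}_Y f_rs_j,f_rs_k)$ (using $h(U,Z)=h_r(f_rU,f_rZ)$ together with the definition \eqref{eq:R-CovDer}), rewrites $s_j\cdot s_k\cdot\psi$ as $\beta_r^{-1}\big(f_rs_j\cdot f_rs_k\cdot\beta_r\psi\big)$ by the intertwining property, and then recognizes the result as the local formula for $\nabla^{h_r,S}_Y(\beta_r\psi)$ in the $h_r$-orthonormal frame $\{f_rs_i\}$, conjugated back by $\beta_r^{-1}$. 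You instead run a uniqueness-of-spin-lift argument: both $\widetilde\nabla$ and $\nabla^{r,S}$ satisfy the Clifford--Leibniz rule over the same tangent connection $\nabla^{h,h_r}$, so their difference is tensorial and commutes with Clifford multiplication, hence is a scalar by Schur's lemma (irreducibility of $\Delta_n$), and the scalar vanishes because both connection one-forms, computed in a common trivialization induced by a local section of the spin structure $Q$, take values in $\kappa_n(\mathfrak{spin}(p,q))$, whose elements are trace-free. What your approach buys is conceptual transparency: it isolates exactly the two properties of $\beta_r$ that matter (it intertwines Clifford multiplication through $f_r$, and it comes from an isomorphism of principal $Spin_0(p,q)$-bundles, so conjugation carries spin connections to spin connections), and it honestly confronts the one genuine subtlety---Clifford compatibility alone determines a spinor connection only up to a scalar-valued one-form---which the paper never has to face because it verifies equality of the full local connection forms outright. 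The price is length, and note that the trivialization-matching fact underlying your trace argument (that $\beta_r$ identifies the frame-induced trivializations of $S(M,h)$ and $S(M,h_r)$) is the very same fact that makes the paper's implicit step $\beta_r^{-1}\big(Y(\beta_r\psi)\big)=Y(\psi)$ legitimate; so the two proofs rest on the same structural input, packaged computationally in the paper and representation-theoretically in your argument.
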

\begin{proof}
  For $\psi\in\Gamma\big(S(M,h)\big)$, we have 
  \begin{align*}
      \nabla^{r,S}_{Y}\psi&=Y(\psi)+\tfrac 12\sum_{j<k}\varepsilon_j\varepsilon_k
          h(\nabla^{h,h_r}_Ys_j,s_k)s_j\cdot s_k\cdot\psi\\
      &=Y(\psi)+\tfrac 12\sum_{j<k}\varepsilon_j\varepsilon_k
          h_r(\nabla^{h_r}_Yf_rs_j,f_rs_k)\beta^{-1}_r\big(f_rs_j\cdot f_rs_k\cdot\beta_r\psi\big)\\
      &= (\beta_r^{-1}\circ\nabla^{h_r,S}_Y\circ\beta_r)\psi,
  \end{align*}
  which completes the proof.
\end{proof}

Let us introduce the notation 
\begin{equation}
    f^{(l)}:=\tfrac{d^l}{dr^l}(f_r)|_{r=0},\quad T^{(l)}(U,Y):=\tfrac{d^l}{dr^l}\big(T^r(U,Y)\big)|_{r=0},
\end{equation}
for $l\in\N_0$. 
\begin{lem}\label{Ableitungen}
    \begin{enumerate}
       \item Let $l\in\N_0$. Then
         \begin{equation}
           f^{(2l+1)}U=0,\quad f^{(2l)}U=\tfrac{(2l)!}{2^l}\left(\tfrac Jn\right)^lU,\quad U\in\Gamma(TM).
         \end{equation}
       \item  
       Let $l\in\N$. Then the torsion $T^r$ of $\nabla^{h,h_r}$ fulfills
       \begin{align} 
          T^{(l)}(U,Y)=0
       \end{align}
       for all $U,Y\in\Gamma(TM)$.
    \end{enumerate}
\end{lem}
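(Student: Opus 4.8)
The plan is to treat the two parts separately, both reducing to the observation that, on an Einstein manifold, the normalized scalar curvature $J$ is constant, so that $f_r$ is, pointwise on $M$, a constant multiple of the identity. For part (1), I would start from the definition $f_r=g(r)\,\id_{TM}$ with $g(r):=\bigl(1-\tfrac{J}{2n}r^2\bigr)^{-1}$. Since $J$ is constant, $g$ depends on $r$ alone and differentiation in $r$ acts only on the scalar $g$. As $g$ is an even function of $r$, all odd derivatives vanish at $r=0$, which gives $f^{(2l+1)}U=0$. For the even derivatives I would expand $g$ as the geometric series
\begin{equation*}
  g(r)=\sum_{k\geq 0}\Bigl(\tfrac{J}{2n}\Bigr)^k r^{2k},
\end{equation*}
read off the coefficient of $r^{2l}$, and multiply by $(2l)!$ to recover $f^{(2l)}U=\tfrac{(2l)!}{2^l}\bigl(\tfrac Jn\bigr)^l U$.

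For part (2), the cleanest route is to note that the pointwise-constant endomorphism $f_r=g(r)\,\id_{TM}$ is parallel for every metric connection on $TM$: because $g(r)$ is constant along $M$, one computes $(\nabla^{h_r}_U f_r)(Y)=\bigl(U g(r)\bigr)Y=0$ for all $U,Y\in\Gamma(TM)$. Substituting this into the torsion formula of Lemma~\ref{torsion} yields $T^r(U,Y)=0$ identically in $r$, and hence $T^{(l)}(U,Y)=0$ for every $l$. Equivalently, one may argue that $h_r=g(r)^{-2}h$ is a constant rescaling of $h$, so that $\nabla^{h_r}=\nabla^h$ and therefore $\nabla^{h,h_r}=\nabla^h$ is already torsion-free.

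Both computations are routine; the point I would flag as essential---rather than technically hard---is the invocation of the Einstein hypothesis to guarantee that $J$ is constant. This is exactly what makes $f_r$ a parallel, pointwise-constant multiple of the identity and collapses the torsion to zero, and it is the reason the higher variation formulas simplify so drastically in the Einstein setting.
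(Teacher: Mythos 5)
Your proof is correct. Part (1) is exactly the paper's argument: constancy of $J$ on an Einstein manifold makes $f_r=g(r)\,\id_{TM}$ with $g(r)=(1-\tfrac{J}{2n}r^2)^{-1}$, the geometric series in $r^2$ gives the even Taylor coefficients, and evenness of $g$ kills the odd derivatives. For part (2), however, you take a genuinely different route that yields a stronger conclusion. The paper differentiates the torsion formula of Lemma~\ref{torsion} $l$ times at $r=0$ and argues term by term via the Leibniz rule: each contribution involves derivatives of $f_r$, $f_r^{-1}$ and $\nabla^{h_r}$ at $r=0$; the former are constant multiples of $\id_{TM}$ by part (1), and any $r$-derivative of $\nabla^{h_r}$, extended to endomorphism fields by the Leibniz rule, annihilates such multiples (the paper phrases this loosely as the derivative ``acting as a covariant derivative''; for $l\geq 1$ it is really tensorial rather than a connection, and all that is actually needed is $\R$-linearity). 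You instead note that $g(r)$ is constant along $M$, so $(\nabla^{h_r}_U f_r)(Y)=\big(Ug(r)\big)Y=0$ and hence $T^r\equiv 0$ identically in $r$ --- equivalently, $h_r=g(r)^{-2}h$ is a constant rescaling of $h$, so $\nabla^{h_r}=\nabla^h$ and $\nabla^{h,h_r}=\nabla^h$ is torsion-free --- after which the vanishing of all derivatives is trivial. (Metricity of the connection plays no role in your computation; any connection annihilates a spatially constant multiple of the identity.) Both arguments ultimately rest on the same Einstein-derived input, the constancy of $J$, but your organization is cleaner: it dispenses with the order-by-order bookkeeping and the imprecise ``covariant derivative'' claim, and the stronger identity $T^r\equiv 0$ (equivalently $\nabla^{r,S}=\nabla^{h,S}$) would also streamline Equation~\eqref{eq:SpinorVergleich} and the proof of Theorem~\ref{DiracVariation}.
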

\begin{proof}
  Expansion of $f_r$ into a formal power series 
  \begin{align*}
      f_r=\left(1-\tfrac{J}{2n}r^2\right)^{-1}\id_{TM}
        =\sum_{l\geq 0} \tfrac{r^{2l}}{(2l)!} (2l)!\left(\tfrac{J}{2n}\right)^l\id_{TM}
  \end{align*}
  gives the first statements. It follows from Lemma~\ref{torsion} that the $l$-th derivative 
  of $T^r$ at $r=0$ is given by a sum, where each contribution 
  contains derivatives of $f_r$, $f_r^{-1}$ and $\nabla^{h_r}$ at $r=0$. 
  Using $f^{(2l+1)}=0$ and 
  $f^{(2l)}=\frac{(2l)!}{2^l}(\frac Jn)^l \id_{TM}$, for all $l\in \N_0$, we just have to show that  
  $\frac{d^l}{dr^l}(\nabla^{h_r})|_{r=0}$ acts as a covariant derivative, hence annihilating the identity map. 
  But this is obvious, since $\frac{d^l}{dr^l}$ 
  does not effect 
the property of $\nabla^{h_r}$ being a covariant derivative.
\end{proof}

In what follows, we 
use two Dirac operators induced by $\nabla^{h,S}$ and $\nabla^{h_r,S}$:
\begin{align}
    \slashed{D}^h:\quad & \Gamma\big(S(M,h)\big)\to\Gamma\big(S(M,h)\big),\nonumber\\
    \slashed{D}^{h_r}:\quad & \Gamma\big(S(M,h_r)\big)\to\Gamma\big(S(M,h_r)\big).
\end{align}
Furthermore, 
we define
\begin{align}
    \slashed{D}^{h,h_r}: \Gamma\big(S(M,h)\big)&\to\Gamma\big(S(M,h)\big)\nonumber\\
    \psi&\mapsto \beta_r^{-1}\circ \slashed{D}^{h_r}\circ\beta_r(\psi).
\end{align}
Lemma~\ref{SpinCovariantDerivatives} and $\beta_r(U\cdot\psi)=f_r(U)\cdot\beta_r(\psi)$ for 
$U\in\Gamma(TM),\psi\in\Gamma\big(S(M,h)\big)$ imply
\begin{equation}
    \slashed{D}^{h,h_r}\psi=\sum_{i=1}^n\varepsilon_i s_i\cdot\nabla^{r,S}_{f_r(s_i)}\psi ,
\end{equation}
and the $r$-derivatives of 
$\slashed{D}^{h,h_r}$ at $r=0$ yield the variation formulas 
for $\slashed{D}^h$ with respect to the $1$-parameter deformation $h_r$ of $h$. Note that 
$\slashed{D}^{h,h_r}$ is not the Dirac operator induced by $\nabla^{r,S}$.

\begin{theorem}\label{DiracVariation}
  Let $(M,h)$
  be a semi-Riemannian Einstein {\it Spin}-manifold 
  with\break $\dim(M)=n$, i.e., $Ric=2(n-1)\lambda h$.
  Let $g_+$ 
  be the associated Poincar\'e-Einstein metric
  on $X$ with $g_+=r^{-2}(dr^2+h_r)$, $h_r=(1-\frac{J}{2n}r^2)^2h$. 
  Then, for all $l\in \N_0, \psi\in\Gamma\big(S(M,h)\big)$, we have 
  \begin{align}
      \tfrac{d^{(2l)}}{dr^{(2l)}}(\slashed{D}^{h,h_r}\psi)|_{r=0}
        &= (2l)!\left(\tfrac{J}{2n}\right)^l\slashed{D}^h\psi,\nonumber\\
      \tfrac{d^{(2l+1)}}{dr^{(2l+1)}}(\slashed{D}^{h,h_r}\psi)|_{r=0}&= 0.
  \end{align} 
\end{theorem}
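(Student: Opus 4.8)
The plan is to exploit the fact that, for a normalized Einstein metric, the normalized scalar curvature $J=\lambda n$ is \emph{constant} on $M$, so that the family $f_r=(1-\tfrac{J}{2n}r^2)^{-1}\id_{TM}$ is a spatially constant multiple of the identity. The starting point is the explicit formula $\slashed{D}^{h,h_r}\psi=\sum_{i=1}^n\varepsilon_i s_i\cdot\nabla^{r,S}_{f_r(s_i)}\psi$ established just before the statement. Since $f_r(s_i)=(1-\tfrac{J}{2n}r^2)^{-1}s_i$ and the connection $\nabla^{r,S}$ is function-linear in its subscript, I would first pull the scalar factor out and write
\begin{align*}
  \slashed{D}^{h,h_r}\psi=\left(1-\tfrac{J}{2n}r^2\right)^{-1}\sum_{i=1}^n\varepsilon_i s_i\cdot\nabla^{r,S}_{s_i}\psi.
\end{align*}

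Next I would insert the comparison formula~\eqref{eq:SpinorVergleich}, which expresses $\nabla^{r,S}_{s_i}\psi$ as $\nabla^{h,S}_{s_i}\psi$ plus a term quadratic in Clifford multiplication whose coefficient is built from the torsion components $T^{r,\sigma}_{ijk}$. Summing against $\varepsilon_i s_i\cdot$ reproduces $\slashed{D}^h\psi$ from the first summand and leaves a Clifford-cubic correction that is linear in the torsion $T^r$ of $\nabla^{h,h_r}$. Thus
\begin{align*}
  \slashed{D}^{h,h_r}\psi=\left(1-\tfrac{J}{2n}r^2\right)^{-1}\big(\slashed{D}^h\psi+R^r\psi\big),
\end{align*}
where $R^r\psi:=\tfrac18\sum_{i}\sum_{j\neq k}\varepsilon_i\varepsilon_j\varepsilon_k\,T^{r,\sigma}_{ijk}\,s_i\cdot s_j\cdot s_k\cdot\psi$ depends on $r$ only through $T^r$.

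The decisive step is to control this $r$-dependence. By Lemma~\ref{Ableitungen}(2) every derivative $T^{(l)}$, $l\in\N$, vanishes, and $T^0=0$ since $f_0=\id_{TM}$; hence $R^r\psi$ vanishes to all orders at $r=0$. Applying the Leibniz rule to the displayed product, every term in which a derivative falls on $R^r$, as well as the undifferentiated $R^r$ evaluated at $r=0$, is therefore zero, so that
\begin{align*}
  \tfrac{d^m}{dr^m}\big(\slashed{D}^{h,h_r}\psi\big)\big|_{r=0}
    =\Big(\tfrac{d^m}{dr^m}\left(1-\tfrac{J}{2n}r^2\right)^{-1}\big|_{r=0}\Big)\slashed{D}^h\psi.
\end{align*}
Since $J$ is constant, $\left(1-\tfrac{J}{2n}r^2\right)^{-1}=\sum_{l\geq0}\left(\tfrac{J}{2n}\right)^l r^{2l}$ is an even power series, whose nonzero Taylor coefficients sit only in even degrees. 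Reading off $\tfrac{1}{m!}\tfrac{d^m}{dr^m}|_{r=0}$ then gives $(2l)!\left(\tfrac{J}{2n}\right)^l\slashed{D}^h\psi$ for $m=2l$ and $0$ for $m=2l+1$, which is exactly the asserted pair of variation formulas.

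The one point requiring care — and where the Einstein hypothesis is genuinely used — is the vanishing of the torsion contribution $R^r$; this is precisely the content of Lemma~\ref{Ableitungen}(2), which in turn rests on $f_r$ being a constant multiple of the identity. For a general (non-Einstein) representative, $J$ would vary over $M$, the covariant derivative $\nabla^{h_r}f_r$ would no longer vanish, and $R^r$ would produce genuine lower-order terms obstructing the clean scalar factorization. Granting Lemma~\ref{Ableitungen}, the remainder of the argument is the elementary bookkeeping carried out above.
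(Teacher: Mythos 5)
Your proof is correct and takes essentially the same approach as the paper's: both arguments reduce the claim to the vanishing of the torsion contribution at $r=0$ to all orders (Lemma~\ref{Ableitungen}(2) via Equation~\eqref{eq:SpinorVergleich}) together with the even Taylor expansion of the scalar $\left(1-\tfrac{J}{2n}r^2\right)^{-1}$. The only cosmetic difference is that you factor this scalar out of $\nabla^{r,S}_{f_r(s_i)}$ by function-linearity and then apply the Leibniz rule to the resulting product, thereby re-deriving Lemma~\ref{Ableitungen}(1) from the geometric series, whereas the paper performs the Leibniz expansion directly on $\nabla^{r,S}_{f_r(s_i)}$ and quotes the formulas for $f^{(l)}$; the ingredients are identical.
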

\begin{proof}
     The $r$-derivatives of $\slashed{D}^{h,h_r}$ at $r=0$ are
     \begin{equation*}
        \tfrac{d^l}{dr^l}(\slashed{D}^{h,h_r}\psi)|_{r=0}=\sum_{k=0}^l\binom{l}{k}
           \sum_{i=1}^n\varepsilon_i s_i\cdot\nabla^{r,S;(k)}_{f^{(l-k)}s_i}\psi,
      \end{equation*}
      where $\nabla^{r,S;(k)}:=\frac{d^k}{dr^k}(\nabla^{r,S})|_{r=0}$, for $k\in\N_0$. 
      From Equation~\eqref{eq:SpinorVergleich}, we obtain
      \begin{align*}
        \tfrac{d^l}{dr^l}(\nabla^{r,S}_{s_i}\psi)
          =\tfrac 18\sum_{j\neq k}\varepsilon_j\varepsilon_k\frac{d^l}{dr^l}(T^{r,\sigma}_{ijk}) s_j\cdot s_k\cdot\psi ,
      \end{align*}
      which vanishes at $r=0$ due to Lemma~\ref{Ableitungen} and 
      the linearity of $\frac{d^l}{dr^l}$. Thus we get 
      \begin{align*}
         \tfrac{d^l}{dr^l}(\slashed{D}^{h,h_r}\psi)|_{r=0}
           =\sum_{i=1}^n\varepsilon_i s_i\cdot \nabla^{0,S}_{f^{(l)}s_i}\psi.
      \end{align*}
      Since $\nabla^{0,S}$ agrees with the spinor covariant derivative $\nabla^{h,S}$ on 
      $S(M,h)$, we may 
      conclude by Lemma~\ref{Ableitungen} that 
      \begin{align*}
        \tfrac{d^{2l}}{dr^{2l}}(\slashed{D}^{h,h_r}\psi)|_{r=0}
          =(2l)!\left(\tfrac{J}{2n}\right)^l\slashed{D}^h\psi, 
          \quad \tfrac{d^{2l+1}}{dr^{2l+1}}(\slashed{D}^{h,h_r}\psi)|_{r=0}=0,
      \end{align*}
      hence completing the proof. 
\end{proof}


\section{Generalized hypergeometric functions and dual Hahn polynomials}\label{HyperStuff}

The aim of the present section is to introduce 
a certain class of polynomials, to prove some of their combinatorial 
properties, and to give their interpretation 
in terms of dual Hahn 
polynomials. These polynomials will be responsible for the product 
structure of conformal powers of the Dirac operator.

The Pochhammer symbol of a complex number $a\in\C$ is denoted 
by
$(a)_l$, and it is defined by $(a)_l:=a(a+1)
\cdots(a+l-1)$ for $l\in\N$, and $(a)_0:=1$. 
The generalized hypergeometric function ${}_pF_q$, for $p,q\in\N$,
with $p$ upper parameters, $q$ lower parameters, and argument $z$, is defined by
\begin{align}
    {}_pF_q
  \left[\begin{matrix}a_1\;,\;\ldots\;,\;a_p\\b_1\;,\;\ldots\;,\;b_q\end{matrix};z\right]
      :=\sum_{l=0}^\infty\frac{(a_1)_l
  \cdots (a_p)_l}{(b_1)_l
  \cdots (b_q)_l}\frac{z^l}{l!},
\end{align}
for $a_i\in\C$ ($1\leq i\leq q$), $b_j\in\C\setminus\{-\N_0\}$ ($1\leq j\leq q$), and $z\in\C$. 

For later purposes, we introduce the polynomials 
\begin{align}
  \tilde{q}_m(y):=\sum_{l=0}^m (-1)^{m-l}(
  \tfrac n2+1+l)_{m-l}\,(k-m)_{m-l}
  \binom ml \prod_{j=1}^l(y-j^2), \label{eq:SolutionSpinor}
\end{align}
for $m\in\N_0$, $k,n\in\N$ and an abstract variable $y$.   
\begin{sat}\label{RecRel}
  The polynomials $\tilde{q}_m(y)$, $m\in\N_0$, satisfy the recurrence 
  relation
  \begin{multline} \label{eq:rec}
      \tilde{q}_{m+1}(y)=\left(y-2m(k-m-\tfrac n2-\tfrac 12)-\tfrac n2(k-1)-k\right)\tilde{q}_m(y)\\
      -m(m-k)(m+\tfrac n2)(m-k+\tfrac n2-1)\tilde{q}_{m-1}(y),
    \end{multline}
  with $\tilde{q}_{-1}(y):=0$ and $\tilde{q}_0(y):=1$.
\end{sat}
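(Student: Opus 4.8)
The plan is to expand $\tilde{q}_m$ in the basis of products $p_l(y):=\prod_{j=1}^l(y-j^2)$ and to turn the claimed recurrence into a finite family of scalar identities among the expansion coefficients. By definition
\[
\tilde{q}_m=\sum_{l=0}^m a_{m,l}\,p_l,\qquad
a_{m,l}=(-1)^{m-l}(\tfrac n2+1+l)_{m-l}(k-m)_{m-l}\binom ml .
\]
Since $a_{m,m}=1$, each $\tilde{q}_m$ is monic of degree $m$ in $y$, so the leading coefficients in \eqref{eq:rec} match automatically and only the lower coefficients need checking. The one structural input is that multiplication by $y$ acts on this basis by the two-term rule $y\,p_l=p_{l+1}+(l+1)^2p_l$, immediate from $p_{l+1}=(y-(l+1)^2)p_l$. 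Hence the coefficient of $p_l$ in $y\,\tilde{q}_m$ equals $a_{m,l-1}+(l+1)^2a_{m,l}$, and, writing $\alpha_m,\beta_m$ for the two coefficients occurring in \eqref{eq:rec}, the recurrence becomes equivalent to
\[
a_{m+1,l}=a_{m,l-1}+\big((l+1)^2-\alpha_m\big)a_{m,l}-\beta_m\,a_{m-1,l},\qquad 0\le l\le m+1.
\]

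To verify this I would, for the interior range $0\le l\le m-1$, divide through by $a_{m-1,l}$. Each of the three ratios telescopes to a rational function of $l,m,k,n$, because passing from one coefficient to a neighbour changes a single Pochhammer factor; for instance $a_{m,l}/a_{m-1,l}=-(\tfrac n2+m)(k-m)\,m/(m-l)$, with analogous expressions for $a_{m+1,l}/a_{m-1,l}$ and $a_{m,l-1}/a_{m-1,l}$. Clearing the common factor $(\tfrac n2+m)(k-m)\,m$ and the denominators $(m-l)(m+1-l)$ reduces the identity to the polynomial equation in $l$
\begin{multline*}
(\tfrac n2+m+1)(k-m-1)(m+1)=(\tfrac n2+l)(k-l)\,l-\big((l+1)^2-\alpha_m\big)(m+1-l)\\
+(m-k+\tfrac n2-1)(m+1-l)(m-l),
\end{multline*}
which I would establish by comparing coefficients of $l^3,l^2,l^1,l^0$: the cubic terms cancel identically, and matching the remaining three powers pins $\alpha_m$ and $\beta_m$ to exactly the values in \eqref{eq:rec}. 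The two boundary cases $l=m$ and $l=m+1$, where $a_{m-1,l}=0$ and the division is not allowed, are checked directly and are trivial (at $l=m+1$ the identity reads $1=1$), while the initialisation $\tilde{q}_{-1}=0$, $\tilde{q}_0=1$ is part of the definition.

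The difficulty here is one of bookkeeping rather than of ideas: collapsing the three Pochhammer ratios correctly and then keeping the many linear factors in $m,k,n$ straight while extracting the coefficients of a cubic in $l$. This is exactly the sort of ${}_3F_2$ identity that also yields to automated (Zeilberger-type) summation, which would give an independent check.

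Finally, as a conceptual cross-check that both predicts $\alpha_m,\beta_m$ and explains \emph{why} a three-term recurrence must exist, I would record that after the substitution $y=t^2$ and elementary Pochhammer rewriting one finds
\[
\tilde{q}_m\ \propto\ {}_3F_2\left[\begin{matrix}-m,\ 1-t,\ 1+t\\ \tfrac n2+1,\ 1-k\end{matrix};1\right],
\]
a dual Hahn polynomial in the variable $\lambda=t^2-1=y-1$ with parameters $\gamma=\tfrac n2$, $\delta=1-\tfrac n2$, $N=k-1$. Its classical three-term recurrence, after undoing the $m$-dependent normalisation factor $(-1)^m(\tfrac n2+1)_m(k-m)_m$, reproduces \eqref{eq:rec} verbatim; this is the dual Hahn interpretation anticipated in Section~\ref{HyperStuff}.
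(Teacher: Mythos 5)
Your proposal is correct and follows essentially the same route as the paper: expanding in the basis $\prod_{j=1}^l(y-j^2)$ and using $y\,p_l=p_{l+1}+(l+1)^2p_l$ (the paper's "replace $y$ by $(y-(l+1)^2)+(l+1)^2$" observation) to reduce the recurrence to coefficient identities, which are then verified by routine Pochhammer-ratio bookkeeping — your reduced cubic identity in $l$ and the boundary cases $l=m,m+1$ match the verification the paper leaves as "a routine matter." The dual Hahn identification you add as a cross-check is exactly the interpretation the paper records separately in its Section on hypergeometric functions, not used there as the proof mechanism.
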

\begin{proof}
We prove the 
statement by comparing the coefficients of $\prod_{j=1}^l(y-j^2)$
on both sides of \eqref{eq:rec}. 
The only detail to observe is that one must replace $y$ in the
coefficient of $\tilde{q}_m(y)$ on the right-hand side of \eqref{eq:rec}
by $(y-(l+1)^2)+(l+1)^2$, with $l$ being the summation index of
the sum in the definition of $\tilde{q}_m(y)$. The term $(y-(l+1)^2)$
then combines with the product $\prod_{j=1}^l(y-j^2)$ to become
$\prod_{j=1}^{l+1}(y-j^2)$.
The verification that the coefficients of $\prod_{j=1}^l(y-j^2)$
do indeed agree is then a routine matter.
\end{proof}

\begin{bem}
  Our considerations are motivated by \cite{FG3},
  where the analogous recurrence relation 
  \begin{multline*}
      q_{m+1}(y)=\left(y-2m(k-m-\tfrac n2)-\tfrac n2(k-1)\right)q_m(y)\\
      -m(m-k)(m-1+\tfrac n2)(m-1+\tfrac n2-k)q_{m-1}(y),
  \end{multline*}
  for $k,n\in\N$, $m\in\N_0$, and $q_{-1}(y):=0$, $q_0(y):=1$, appears. 
  Its solution is given by 
  \begin{align}
      q_m(y):=\sum_{l=0}^m (-1)^{m-l}(\tfrac n2+l)_{m-l}\,(k-m)_{m-l}
      \binom ml \prod_{j=1}^l\big(y-j(j-1)\big).\label{eq:SolutionFunction}
  \end{align}
\end{bem}

In the rest of the 
section, we discuss 
interpretations of $\tilde{q}_m(y)$ and $q_m(y)$ 
in terms of dual Hahn polynomials, cf.~
\cite{KMcG,KLS}. The Hahn polynomial $Q_n(x):=Q_n(x;\alpha,\beta,\break N)$ 
is defined by 
\begin{align}
    Q_n(x):={}_3F_2\left[\begin{matrix}-n\;,\;-x\;,\;n+\alpha+\beta+1\\ \alpha+1\;,\;-N+1\end{matrix}; 1\right],
\end{align}
for 
$\Re(\alpha), \Re(\beta)>-1$, $N\in\N$ and $n=0,\ldots,N-1$. It is known 
that, beside recurrence relations, Hahn polynomials satisfy
a difference relation, cf.~\cite[Equation~(1.3)]{KMcG}. The dual Hahn polynomials 
can be defined by recurrence relations with the same coefficients 
as the Hahn polynomials have in their difference relations, cf.~\cite[Equation~(1.18)]{KMcG}.

For $\lambda(n):=n(n+\alpha+\beta+1)$, 
the relation between Hahn polynomials $Q_n(x)$ and dual Hahn polynomials 
$R_k(\lambda):=R_k(\lambda;\alpha,\beta,N)$ is given by
\begin{align}
    R_k\big(\lambda(n)\big)=Q_n(k). 
\end{align}

Notice that
\begin{align*}
  (-y)_l\,(1+y)_l&=(-y)(y+1)
  \cdots(-y+j)
  \cdot(y+1+j)
  \cdots(-y+l-1)(y+l)\\
  &=(-1)^l\prod_{j=1}^{l}\big(y(y+1)-j(j-1)\big),\\
  (1-y)_l\,(y+1)_l&=(-1)^l\prod_{j=1}^{l}\big(y^2-j^2\big),
\end{align*}
for $l\in\N$. 
Furthermore, by using the identities 
for Pochhammer symbols
\begin{align*}
    (\tfrac n2+l)_{m-l}&=(\tfrac n2)_m\frac{1}{(\tfrac n2)_l},
      \quad  (\tfrac n2+1+l)_{m-l}=(\tfrac n2+1)_m\frac{1}{(\tfrac n2+1)_l},\\
    (k-m)_{m-l}&=(k-m)_m\frac{(-1)^l}{(1-k)_l},\quad 
    \binom ml  =(-1)^l\frac{(-m)_l}{l!},
\end{align*}
one obtains the following precise relations.
\begin{sat}
  For all $m\in\N_0, k,n\in\N$, we have
  \begin{align}
    \tilde{q}_m\big(\lambda(y-1)\big)&=(-1)^m(\tfrac n2+1)_m(k-m)_m
    ~{}_3F_2\left[\begin{matrix}-(y-1)\;,\;-m\;,\;1+y\\ \tfrac n2+1\;,\;1-k\end{matrix}; 1\right]\nonumber\\
    &=(-1)^m(\tfrac n2+1)_m(k-m)_m~ Q_{y-1}(m;\tfrac{n}{2},1-\tfrac{n}{2},k)\nonumber\\
    &=(-1)^m(\tfrac n2+1)_m(k-m)_m~ R_m\big(\lambda(y-1);\tfrac n2,1-\tfrac n2,k\big)
  \end{align}
  and
  \begin{align}
    q_m\big(\lambda(y)\big)&=(\tfrac n2)_m(k-m)_m(-1)^m
    ~{}_3F_2\left[\begin{matrix}-y\;,\;-m\;,\;1+y\\ \tfrac n2\;,\;1-k\end{matrix}; 1\right]\nonumber\\
    &=(\tfrac n2)_m(k-m)_m(-1)^m~ Q_{y}(m;\tfrac{n}{2}-1,1-\tfrac{n}{2},k)\nonumber\\
    &=(\tfrac n2)_m(k-m)_m(-1)^m~ R_m\big(\lambda(y);\tfrac n2-1,1-\tfrac n2,k\big).
  \end{align}
\end{sat}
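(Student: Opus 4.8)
The plan is to prove both chains of equalities by direct manipulation of the defining sums \eqref{eq:SolutionSpinor} and \eqref{eq:SolutionFunction}, reducing each to a terminating ${}_3F_2$-series at argument $1$, which is then matched with the displayed definition of the Hahn polynomial and finally transported to the dual Hahn polynomial via the relation $R_k(\lambda(n))=Q_n(k)$. The two cases are entirely parallel, so I would carry out the argument in detail for $\tilde{q}_m$ and only indicate the (slightly simpler) modifications for $q_m$.

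First I would substitute the relevant quadratic argument into \eqref{eq:SolutionSpinor}, arranged so that each factor $\prod_{j=1}^l(\,\cdot-j^2)$ turns into $\prod_{j=1}^l(y^2-j^2)$, and then invoke the product-to-Pochhammer identity $(1-y)_l(y+1)_l=(-1)^l\prod_{j=1}^l(y^2-j^2)$ displayed just before the statement. For $q_m$ one instead uses $(-y)_l(1+y)_l=(-1)^l\prod_{j=1}^l\bigl(y(y+1)-j(j-1)\bigr)$ at the argument $\lambda(y)=y(y+1)$, where the product matches directly. This rewrites the $l$-th summand as a ratio of Pochhammer symbols in $y$ multiplied by the coefficients $(\tfrac n2+1+l)_{m-l}$, $(k-m)_{m-l}$ and $\binom ml$. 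I would then strip off the $l$-independent prefactor using the four Pochhammer identities collected before the Proposition, namely $(\tfrac n2+1+l)_{m-l}=(\tfrac n2+1)_m/(\tfrac n2+1)_l$, $(k-m)_{m-l}=(-1)^l(k-m)_m/(1-k)_l$ and $\binom ml=(-1)^l(-m)_l/l!$. A short sign count shows the accumulated powers of $-1$ combine to the $l$-independent factor $(-1)^m$, so that after extracting $(-1)^m(\tfrac n2+1)_m(k-m)_m$ the residual sum collapses to $\sum_{l\ge 0}\frac{(-(y-1))_l(-m)_l(1+y)_l}{(\tfrac n2+1)_l(1-k)_l}\frac{1}{l!}$, which terminates at $l=m$ because of $(-m)_l$; this is exactly the ${}_3F_2$ in the statement, giving the first equality.

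Comparing the upper parameters $-(y-1),-m,1+y$ and lower parameters $\tfrac n2+1,1-k$ of this series with the displayed definition of $Q_n(x;\alpha,\beta,N)$ identifies it as $Q_{y-1}(m;\tfrac n2,1-\tfrac n2,k)$, which is the second equality, and the relation $R_k(\lambda(n))=Q_n(k)$ (applied with $k\rightsquigarrow m$ and $n\rightsquigarrow y-1$) yields the dual Hahn form, closing the chain. The $q_m$ case follows the same three moves with $\tfrac n2+1$ replaced by $\tfrac n2$ throughout, matching the parameters of $Q_y(m;\tfrac n2-1,1-\tfrac n2,k)$. The main obstacle is purely one of bookkeeping: keeping the signs, the reindexing $l\mapsto m-l$, and especially the quadratic substitution aligned so that the products over $j$ reproduce the Pochhammer identities exactly. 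The one genuinely delicate point is the normalization of the argument of $\tilde{q}_m$, i.e.\ ensuring that $\prod_{j=1}^l(\,\cdot-j^2)$ is evaluated at the value making it $\prod_{j=1}^l(y^2-j^2)$; I would verify this on a low-order instance (for example $m=1$, where $\tilde{q}_1(\,\cdot\,)=(\,\cdot\,)-1-(\tfrac n2+1)(k-1)$) to fix the constant term correctly. Once this normalization is pinned down, the remaining matching of the two parameter sets against the Hahn definition is immediate.
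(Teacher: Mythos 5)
Your proposal is correct and coincides with the paper's own argument: the Proposition there is established exactly by the displayed product-to-Pochhammer identities together with the four Pochhammer-symbol identities, which convert the defining sums \eqref{eq:SolutionSpinor} and \eqref{eq:SolutionFunction} into the stated terminating ${}_3F_2$'s, after which the Hahn and dual Hahn identifications are purely definitional via $R_k\big(\lambda(n)\big)=Q_n(k)$. Your caution about the argument normalization of $\tilde{q}_m$ is well placed: the sum collapses precisely when the products are evaluated at $y^2$, whereas $\lambda(y-1)=(y-1)(y+1)=y^2-1$ for the parameters $(\tfrac n2,1-\tfrac n2)$, so the $m=1$ check you propose is exactly what pins down this unit shift, which the paper's formulation glosses over.
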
  
Hence, up to a 
multiplicative factor, both $\tilde{q}_m(y)$ and $q_m(y)$ can be realized as dual Hahn polynomials. 


\section{Product structure (factorization) of conformal powers of the Dirac operator}\label{ProductStructure}
  
In the present 
section, we show that conformal powers of the Dirac operator on Einstein manifolds 
obey a 
product structure,
in the sense that they factor into linear factors based on shifted Dirac operators. 
This result is parallel to the case of conformal 
powers of the Laplace operator on Einstein manifolds, cf.~\cite[Theorem~$1.2$]{Gover}. 
  
Let us denote the Dirac operator on $(M,h)$ by $\slashed{D}$. 
(Notice that in Section~\ref{Variations} we used $\slashed{D}^h$ instead of $\slashed{D}$.)
The proof of 
our main result, Theorem~\ref{ProductOfConformalPowers},
relies on the construction of 
conformal powers of the Dirac operator. 
\begin{theorem}[\sc \cite{GMP1}]\label{existence}
    Let $(M,h)$ be a semi-Riemannian {\it Spin}-manifold of dimension $n$. For every $N\in\N_0$ ($N\leq \tfrac n2$ 
    for even $n$) there exists a linear differential operator, called conformal 
    power of the Dirac operator, 
    \begin{align}
      \mathcal{D}_{2N+1}:\Gamma\big(S(M,h)\big)\to\Gamma\big(S(M,h)\big),
    \end{align}
    satisfying
    \begin{enumerate}
      \item $\mathcal{D}_{2N+1}$ is of order $2N+1$ 
        and $\mathcal{D}_{2N+1}=\slashed{D}^{2N+1}+\text{LOT}$, where,
        as before, LOT denotes lower order terms;
      \item $\mathcal{D}_{2N+1}$ is conformally 
        covariant, that is, 
        \begin{align}
           \widehat{\mathcal{D}}_{2N+1}\left(e^{\frac{2N+1-n}{2}\sigma}\widehat{\psi}\right)
             =e^{-\frac{2N+1+n}{2}\sigma}\widehat{\mathcal{D}_{2N+1}\psi}
        \end{align}
        for every $\psi\in\Gamma\big(S(M,h)\big)$, $\sigma\in\mathcal{C}^\infty(M)$. 
    \end{enumerate}
\end{theorem}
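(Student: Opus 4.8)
The plan is to obtain $\mathcal{D}_{2N+1}$ from the scattering theory of the Dirac operator $\slashed{D}_+$ on the associated Poincar\'e--Einstein space $(X,g_+)$, following \cite{GMP1} and in direct analogy with the construction of the GJMS operators in \cite{GZ}. First I would lift $S(M,h)$ to a spinor bundle over $(X,g_+)$ and, using the product structure $X=M\times(0,\varepsilon)$ together with $g_+=r^{-2}(dr^2+h_r)$, split $\slashed{D}_+$ along the normal direction as
\begin{align*}
  \slashed{D}_+ = r\,\big(\nu\cdot\partial_r + \slashed{D}^{h_r} + \text{lower-order warping terms}\big),
\end{align*}
where $\nu$ denotes Clifford multiplication by the unit normal and $\slashed{D}^{h_r}$ is the Dirac operator of the slice metric $h_r$. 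Because $\nu$ anticommutes with the tangential Dirac operator, the indicial roots of $\slashed{D}_+$ at $r=0$ can be computed explicitly, and the two families of formal solutions are separated by $2N+1$ powers of $r$, which fixes the spectral weight attached to order $2N+1$.

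Next I would solve, formally in $r$, the boundary value problem for $\slashed{D}_+$ with prescribed leading coefficient $\psi\in\Gamma\big(S(M,h)\big)$ at the relevant indicial weight. Writing the solution as $\Psi\sim r^{a}\sum_{j\ge 0}r^{j}\psi_j$ with $\psi_0=\psi$ and matching powers of $r$ produces a recursion $\mathcal{I}_j\,\psi_j=(\text{differential expression in }\psi_0,\dots,\psi_{j-1})$, in which the indicial operator $\mathcal{I}_j$ is of algebraic (zeroth-order) type and is invertible for $0<j<2N+1$. At the resonant order $j=2N+1$ the indicial factor vanishes, so the coefficient $\psi_{2N+1}$ cannot be chosen freely; the corresponding right-hand side, read as a differential operator acting on the boundary datum $\psi$, is (up to a normalizing constant) the operator $\mathcal{D}_{2N+1}$. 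Property~(1) then follows by tracking principal parts through this recursion: each step contributes, at leading order, one factor of the Dirac operator $\slashed{D}^h$, so that the leading term of $\mathcal{D}_{2N+1}$ is $\slashed{D}^{2N+1}$ and all remaining contributions are collected into the lower-order terms.

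For the conformal covariance in property~(2) I would combine two facts. On the one hand, the Dirac operator on $(X,g_+)$ is conformally equivariant under rescaling of the defining function $r$, so such a rescaling acts on the boundary data precisely through the conformal weights appearing in~(2). On the other hand, as recalled in Section~2, replacing $h$ by $\widehat{h}=e^{2\sigma}h$ changes $g_+$ only by a diffeomorphism fixing $M\times\{0\}$ (to the order relevant for even $n$); since $\slashed{D}_+$ and the formal solution $\Psi$ are natural objects, the resonant coefficient transforms by exactly these weights, which is the asserted covariance relation.

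The main obstacle is the even-dimensional case. There the Poincar\'e--Einstein family $h_r$ is itself determined only up to order $r^{n}$, and the indicial factor governing the recursion can vanish a second time at order $r^{n}$, generating a logarithmic term and an obstruction to a smooth formal solution. Controlling this phenomenon is what forces the restriction $N\le\tfrac n2$; within this range one must still verify that the resonant right-hand side is a genuine local (polynomial in derivatives) differential operator rather than a nonlocal scattering operator, and that its leading symbol is normalized to $\slashed{D}^{2N+1}$. Establishing this locality and the precise normalization at the resonance is the delicate technical core of the argument.
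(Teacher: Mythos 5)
Your proposal follows essentially the same route as the paper's own outline of the construction from \cite{GMP1}: a formal power-series solution in $r$ of the Dirac eigen-equation on the associated Poincar\'e--Einstein space, split via Clifford multiplication by the normal vector, with $\mathcal{D}_{2N+1}$ arising as the obstruction when the indicial factor vanishes at order $2N+1$ (equivalently at $\lambda=-\tfrac{2N+1}{2}$), leading order $\slashed{D}^{2N+1}$ tracked through the recursion, conformal covariance deduced from the diffeomorphism-uniqueness of $g_+$, and the restriction $N\leq\tfrac n2$ for even $n$ traced to the finite determinacy of $h_r$ and the second resonance. The only (cosmetic) difference is that the paper works with the conjugated operator $D(\bar{g})$ on the conformal compactification $(X,\bar{g}=r^2g_+)$ rather than with $\slashed{D}^{g_+}$ directly.
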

We briefly outline the main point of the proof, which will be 
then analyzed in detail on Einstein manifolds. 
Let $g_+$ 
be the associated Poincar\'e-Einstein metric
on $X$ with conformal infinity $(M,[h])$. The conformal compactification of 
$\big(X=M\times (0,\varepsilon),g_+\big)$ is 
\begin{align*}
    \big(M\times [0,\varepsilon),\bar{g}:=r^2g_+=dr^2+h_r),
\end{align*}
where $\bar{g}$ smoothly extends to $r=0$. Corresponding spinor bundles are denoted by 
\begin{align*}
    S(M,h),\quad S(X,g_+),\quad S(X,\bar{g}),
\end{align*}
respectively. The spinor bundle $S(X,\bar{g})|_{r=0}$ is 
isomorphic to $S(M,h)$
if $n$ is even, 
and it is isomorphic to $S(M,h)\oplus S(M,h)$ 
if $n$ is odd. 
The proof of Theorem~\ref{existence} is based on the extension of a boundary 
spinor $\psi\in\Gamma(S(X,\bar{g})|_{r=0})$ to the 
interior $\theta\in\Gamma\big(S(X,\bar{g})\big)$: one requires $\theta$ to be a formal solution of 
\begin{align}
    D(\bar{g})\theta=i\lambda\theta,\quad \lambda\in\C .\label{eq:EigenEquation}
\end{align}
Here, $D(\bar{g})$ arises by applying the vector bundle isomorphism 
$F_r:S(X,g_+)\to S(X,\bar{g})$, which exists since $g_+$ and $\bar{g}$ are conformally equivalent, to 
the equation $\slashed{D}^{g_+}\varphi=i\lambda\varphi$, $\lambda\in\C$ 
and $\varphi\in\Gamma\big(S(X,g_+)\big)$. 
The solution of Equation~\eqref{eq:EigenEquation} is obstructed for $\lambda=-\frac{2N+1}{2}$, 
and the obstruction induces a conformally covariant 
linear differential operator $\mathcal{D}_{2N+1}=\slashed{D}^{2N+1}+LOT$.  
 
Let us be more specific. Let $(M,h)$ be 
a semi-Riemannian Einstein {\it Spin}-manifold, normalized by 
$Ric(h)=\frac{2(n-1)J}{n}h$ for constant normalized scalar 
curvature $J\in\R$. Consider the embedding $\iota_r:M\to X$ given by 
$\iota_r(m):=(r,m)$. Then $(M,\iota_r^*({\bar g})=h_r)$  
is a hypersurface in $(X,\bar{g})$ with trivial space-like normal bundle. 
It follows from \cite{BGM} that the Dirac operator $\slashed{D}^{\bar{g}}$ of $(X,\bar{g})$ and 
the leaf-wise (or, hypersurface) Dirac operator 
\begin{align*}
    \widetilde{\slashed{D}}^{h_r}:=\partial_r\cdot
       \sum_{i=1}^n\varepsilon_is_i\cdot\widetilde{\nabla}^{h_r,S}_{s_i}
      :\Gamma\big(S(X,\bar{g})\big)\to\Gamma\big(S(X,\bar{g})\big)
\end{align*}
for an $h_r$-orthonormal frame $\{s_i\}_i$ on $M$ are related by 
\begin{align}\label{hyperdirac}
    \iota_r^*\partial_r\cdot\slashed{D}^{\bar{g}}=\widetilde{\slashed{D}}^{h_r}\iota_r^*
       +\tfrac n2\iota_r^* H_r-\iota_r^*\nabla^{\bar{g},S}_{\partial_r},
\end{align}
where $H_r:=\frac 1n\tr_{h_r}(W_r)$ is the $h_r$-trace of the Weingarten map 
associated with the 
embedding $\iota_r$. 
We used a swung dash (on $\widetilde{\slashed{D}}^{h_r}$) in order to 
emphasize the action on the spinor 
bundle on $(X,\bar{g})$. At $r=0$, we have the identification 
\begin{align*}
    \widetilde{\slashed{D}}:=\widetilde{\slashed{D}}^{h_0}\simeq
     \begin{cases} 
          \slashed{D},
          & \text{if $n$ is even},\\
          \begin{pmatrix}\slashed{D}&0\\0&-\slashed{D}\end{pmatrix},
          & \text{if $n$ is odd.}
     \end{cases}
\end{align*} 
The equation $\slashed{D}^{g_+}\varphi=i\lambda\varphi$, $\lambda\in\C$ 
and $\varphi\in\Gamma\big(S(X,g_+)\big)$, is equivalent to 
Equation~\eqref{eq:EigenEquation} by combination of conformal 
covariance, Equation~\eqref{hyperdirac}, and the isomorphism 
$F_r$, where the linear differential operator 
$D(\bar{g}):\Gamma\big(S(X,\bar{g})\big)\to \Gamma\big(S(X,\bar{g})\big)$ is given by 
\begin{align*}
    D(\bar{g})\theta=-r\partial_r\cdot\widetilde{\slashed{D}}^{h_r}\theta-\tfrac n2 r H_r\partial_r\cdot\theta
        + r\partial_r\cdot\nabla^{\bar{g},S}_{\partial_r}\theta-\tfrac n2 \partial_r\cdot\theta
\end{align*}
for $\theta=F_r(\varphi)$. Using Theorem~\ref{DiracVariation}, we find 
the explicit formulas 
\begin{align}\label{Dhhrcomp}
    \widetilde{\slashed{D}}^{h_r}=\left(1-\tfrac{J}{2n}r^2\right)^{-1}\widetilde{\slashed{D}},
       \quad H_r=\tfrac Jn r\left(1-\tfrac{J}{2n}r^2\right)^{-1}.
\end{align}
This is a consequence of the Einstein assumption on $M$.
In general, there is no explicit formula 
analogous to Equation \eqref{Dhhrcomp}. 
We decompose the spinor 
bundle $S(X,\bar{g})$ into the $\pm i$-eigenspaces $S^{\pm \partial_r}(X,\bar{g})$ 
with respect to the linear map $\partial_r\cdot : S(X,\bar{g})\to S(X,\bar{g})$ 
satisfying $\partial_r^2=-1$. 
The formal solution of Equation~\eqref{eq:EigenEquation} is constructed inside
\begin{align*}
    \mathcal{A}:=\big\{\theta=\sum_{j\geq 0}r^j\theta_j~|~\theta_j\in\Gamma\big(S(X,\bar{g})\big),\;
       \nabla^{\bar{g},S}_{\partial_r}\theta_j=0\big\},
\end{align*}
and
\begin{align*}
   \bar{\theta}:=r^{\frac n2+\lambda}\theta=\sum_{j\geq 0}r^{\frac n2+\lambda+j}
     (\theta^+_j+\theta^-_j)\in\mathcal{A}
\end{align*}
for $\theta_j^\pm\in\Gamma\big(S^{\pm\partial_r}(X,\bar{g})\big), j\in\N_0$, 
is a solution of Equation~\eqref{eq:EigenEquation} provided the coupled system
of recurrence relations
\begin{align}
    j\theta^+_j&=\widetilde{\slashed{D}}\theta^-_{j-1}+\tfrac{n+j-2}{2n}J\theta^+_{j-2},\notag\\
    (2\lambda+j)\theta_j^- &=\widetilde{\slashed{D}}\theta^+_{j-1}
      +\tfrac{2\lambda+n+j-2}{2n}J\theta^-_{j-2},\label{eq:System}
\end{align}
holds for all $j\in\N_0$. Note that we only consider restrictions to $r=0$ and then 
extend $\theta^\pm_j$, $j\geq 0$, by parallel transport with respect to $\nabla^{\bar{g},S}$ 
along the geodesic induced by the $r$-coordinate. The initial data are given by 
$\theta_0^+:=\psi^+$ for some 
$\psi^+\in\Gamma(S^{+\partial_r}(X,\bar{g})|_{r=0})$, and $\theta_0^-=0$. 
Assuming $\lambda\notin -\N+\frac 12$, the system can be solved uniquely for all 
$j\in\N$ if $n$ is odd, and for all $j\in\N$ such that $j\leq n$ if $n$ is even. 
The obstruction at $\lambda=-\frac{2N+1}{2}$, for $N\in\N_0$ ($N\leq \frac n2$ for even $n$),  
is given by $\mathcal{D}_{2N+1}$ for $N\in\N$.

The application of $\widetilde{\slashed{D}}$ to the system \eqref{eq:System}
together with the shift of $j$ to $j-1$, 
respectively $j-3$, implies
\begin{align*}
      (j-1)\widetilde{\slashed{D}}\theta^+_{j-1}&=\widetilde{\slashed{D}}^2\theta_{j-2}^-
         +\tfrac{n+j-3}{2n}J\widetilde{\slashed{D}}\theta^+_{j-3},\\
      (2\lambda+j-1)\widetilde{\slashed{D}}\theta^-_{j-1}&=\widetilde{\slashed{D}}^2\theta^+_{j-2}
         +\tfrac{2\lambda+n+j-3}{2n}J\widetilde{\slashed{D}}\theta^-_{j-3},\\
      \widetilde{\slashed{D}}\theta^-_{j-3}&= (j-2)\theta^+_{j-2}-\tfrac{n+j-4}{2n}J\theta^+_{j-4},\\
      \widetilde{\slashed{D}}\theta^+_{j-3}&= (2\lambda+j-2)\theta^-_{j-2}
         -\tfrac{2\lambda+n+j-4}{2n}J\theta^-_{j-4}.
\end{align*}
These formulas can be used to decouple the system \eqref{eq:System} into 
\begin{align}
      j\theta^+_j=&\left(\tfrac{1}{2\lambda+j-1}\widetilde{\slashed{D}}^2
        +\tfrac{(2\lambda+n+j-3)(j-2)}{2n(2\lambda+j-1)}J
        +\tfrac{(2\lambda+j-1)(n+j-2)}{2n(2\lambda+j-1)}J\right)\theta^+_{j-2}\notag\\
     &-\tfrac{(2\lambda+n+j-3)(n+j-4)}{4n^2(2\lambda+j-1)}J^2\theta^+_{j-4},\notag\\
     (2\lambda+j)\theta^-_j=&\left(\tfrac{1}{j-1}\widetilde{\slashed{D}}^2
        +\tfrac{(2\lambda+j-2)(n+j-3)}{2n(j-1)}J
        +\tfrac{(2\lambda+n+j-2)(j-1)}{2n(j-1)}J\right)\theta^-_{j-2}\notag\\
     &-\tfrac{(2\lambda+n+j-4)(n+j-3)}{4n^2(j-1)}J^2\theta^-_{j-4},\label{eq:DecoupledSystem}
\end{align}
for all $j\geq 2$, with the initial data $\theta^+_0=\psi^+$, $\theta^-_0=0$, $\theta^+_1=0$, 
and $\theta^-_1=\frac{1}{2\lambda+1}\widetilde{\slashed{D}}\psi^+$. 
Introducing $\phi_l:=\theta^-_{2l+1}$ for $l\in\N_0$, Equation~\eqref{eq:DecoupledSystem} is 
equivalent to 
\begin{align}
      2l(2\lambda+2l+1)\phi_l=&\left(\widetilde{\slashed{D}}^2
       +\tfrac{(2\lambda+2l-1)(n+2l-2)}{2n}J
       +\tfrac{2l(2\lambda+n+2l-1)}{2n}J\right)\phi_{l-1},\notag\\
      &-\tfrac{(2\lambda+n+2l-3)(n+2l-2)}{4n^2}J^2\phi_{l-2},  \label{eq:recurence}
\end{align}
for $l\in\N$ and $\phi_0:=\frac{1}{2\lambda+1}\widetilde{\slashed{D}}\psi^+$. We define 
the solution operators $\tilde{q}_l(y)$ by
\begin{align}
      4^ll!\,(\tfrac{n}{2J})^l(\lambda+\tfrac 32)_l\,\phi_l=\tilde{q}_l(y)\phi_0,\label{eq:Obstruction}
\end{align}
where $y:=\frac{n}{2J}\widetilde{\slashed{D}}^2$. Then 
Equation~\eqref{eq:recurence} yields a recurrence relation for $\tilde{q}_l(y)$,
namely
\begin{multline*}
      \tilde{q}_l(y)=\big(y+(\lambda+l-\tfrac 12)(l+\tfrac n2-1)
         +l(\lambda+l+\tfrac n2-\tfrac 12)\big)\tilde{q}_{l-1}(y)\\
      -(l-1)(l+\tfrac n2-1)(l+\lambda-\tfrac 12)(l+\lambda+\tfrac n2-\tfrac 32)\tilde{q}_{l-2}(y),
\end{multline*}
$l\in\N$, $\tilde{q}_{-1}(y):=0$ and $\tilde{q}_0(y):=1$. Changing $l$ to $(m+1)$ and 
substituting $\lambda=-\frac{2N+1}{2}$ for $N\in\N_0$,
we obtain
\begin{multline}
      \tilde{q}_{m+1}(y)=\big(y-2m(N-m-\tfrac n2-\tfrac 12)-\tfrac n2(N-1)-N\big)\tilde{q}_{m}(y)\notag\\
      -m(m-N)(m+\tfrac n2)(m-N+\tfrac n2-1)\tilde{q}_{m-1}(y).\label{eq:recurence1}
\end{multline}
The unique solution of the recurrence relation \eqref{eq:recurence1} is given by 
\begin{align}
      \tilde{q}_m(y):=\sum_{l=0}^m (-1)^{m-l}(\tfrac n2+1+l)_{m-l}\,(N-m)_{m-l}
        \binom ml  \prod_{j=1}^l(y-j^2),
\end{align}
cf.~Proposition~\ref{RecRel}, and it specializes for $m=N$ to 
\begin{align*}
      \tilde{q}_N(y)=\prod_{j=1}^N(y-j^2)
        =\prod_{j=1}^N\left(\sqrt{\tfrac{n}{2J}}\widetilde{\slashed{D}}-j\right)
            \left(\sqrt{\tfrac{n}{2J}}\widetilde{\slashed{D}}+j\right). 
\end{align*}
The solution $\phi_l$, cf.~Equation~\eqref{eq:Obstruction}, multiplied by $(2\lambda+1)$ 
is obstructed at $\lambda=-\frac{2N+1}{2}$, $N\in\N_0$, and we get
\begin{align*}
    4^NN!(-N)_N\phi_N=
    \left(\tfrac{n}{2J}\right)^{-N}\tilde{q}_N(y)\widetilde{\slashed{D}}\psi^+.
\end{align*}
Repeating all the previous steps with eigen-equation~\eqref{eq:EigenEquation} for the eigenvalue 
$-\lambda$ and initial data $\psi^-\in\Gamma(S^{-\partial_r}(X,\bar{g})|_{r=0})$, the obstruction 
at $\lambda=-\frac{2N+1}{2}$, for $N\in\N_0$, induces 
\begin{align}
   \mathcal{D}_{2N+1}&=(\frac{n}{2J})^{-N}\slashed{D}
     \prod_{j=1}^N\left(\sqrt{\tfrac{n}{2J}}\slashed{D}-j\right)
     \left(\sqrt{\tfrac{n}{2J}}\slashed{D}+j\right)\nonumber\\
   &=\slashed{D}\prod_{j=1}^N\left(\slashed{D}-j\sqrt{\tfrac{2J}{n}}\right)\left(\slashed{D}
      +j\sqrt{\tfrac{2J}{n}}\right),
\end{align}
the conformal power of the Dirac operator in the factorized form. 
Note that there is no restriction on $N\in\N_0$ in the case of even $n$. 
Thus we have
the following result.
\begin{theorem}\label{ProductOfConformalPowers}
    Let $(M,h)$ be a semi-Riemannian Einstein {\it Spin}-manifold of dimension $n$, normalized 
    by $Ric(h)=\frac{2(n-1)J}{n}h$ for constant normalized scalar curvature $J\in\R$.

    The $(2N+1)$-th conformal power of the Dirac operator, $N\in\N_0$, satisfies
    \begin{align}
      \mathcal{D}_{2N+1}\psi&=\prod_{j=1}^{2N+1}
         \left(\slashed{D}-(N-j+1)\sqrt{\tfrac{2J}{n}}\right)\psi\nonumber\\
      &=\slashed{D}\prod_{j=1}^N\left(\slashed{D}^2-j^2\left(\tfrac{2J}{n}\right)\right)\psi
    \end{align}
    for all $\psi\in\Gamma\big(S(M,h)\big)$. The empty product is regarded as $1$. 
\end{theorem}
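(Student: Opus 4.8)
The plan is to realize $\mathcal{D}_{2N+1}$ as the obstruction, at the eigenvalue $\lambda=-\frac{2N+1}{2}$, to extending a boundary spinor to a formal eigenspinor of the operator $D(\bar g)$ on the conformally compactified Poincar\'e--Einstein space, and to exploit the Einstein hypothesis to render this obstruction completely explicit. The decisive simplification is that, on an Einstein manifold, Theorem~\ref{DiracVariation} forces the $r$-variation of the hypersurface Dirac operator to be governed by the single scalar function $(1-\frac{J}{2n}r^2)^{-1}$; this produces the closed formulas \eqref{Dhhrcomp} for $\widetilde{\slashed{D}}^{h_r}$ and for the mean curvature $H_r$, so that $D(\bar g)$ is expressed purely through $\widetilde{\slashed{D}}$ and elementary functions of $r$. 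For $N=0$ the claim is the tautology $\mathcal{D}_1=\slashed{D}$, so I take $N\in\N$ in what follows.

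First I would substitute \eqref{Dhhrcomp} into $D(\bar g)$, insert the formal ansatz $\bar\theta=r^{n/2+\lambda}\sum_{j\ge 0}r^j(\theta_j^++\theta_j^-)$ with $\theta_j^\pm$ sections of the $\pm\partial_r$-eigenbundles $S^{\pm\partial_r}(X,\bar g)$, and collect powers of $r$. This yields the coupled two-term system \eqref{eq:System}. Applying $\widetilde{\slashed{D}}$ once and shifting the index $j$ decouples it into \eqref{eq:DecoupledSystem}, and passing to $\phi_l:=\theta^-_{2l+1}$ collapses the whole construction to the single three-term recurrence \eqref{eq:recurence} in the operator variable $y:=\frac{n}{2J}\widetilde{\slashed{D}}^2$.

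Next I would encode the solution of \eqref{eq:recurence} by operator polynomials $\tilde q_l(y)$ through the normalization \eqref{eq:Obstruction}; the recurrence \eqref{eq:recurence} then becomes a scalar polynomial recurrence for $\tilde q_l(y)$, and setting $\lambda=-\frac{2N+1}{2}$ (with $N$ in the role of the parameter $k$) reproduces exactly \eqref{eq:recurence1}. At this point I would invoke Proposition~\ref{RecRel}, whose explicit solution $\tilde q_m(y)=\sum_{l=0}^m(-1)^{m-l}(\frac n2+1+l)_{m-l}(N-m)_{m-l}\binom ml\prod_{j=1}^l(y-j^2)$ solves that recurrence uniquely. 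The key observation is that the Pochhammer factor $(N-m)_{m-l}$ vanishes for every $l<m$ once $m=N$, so that only the top summand survives and $\tilde q_N(y)=\prod_{j=1}^N(y-j^2)$.

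Finally, since the normalization factor in \eqref{eq:Obstruction} degenerates at $\lambda=-\frac{2N+1}{2}$, the obstruction to solving \eqref{eq:EigenEquation} is precisely $\tilde q_N(y)$ applied to the leading datum. Rewriting each factor as $y-j^2=\frac{n}{2J}(\widetilde{\slashed{D}}-j\sqrt{2J/n})(\widetilde{\slashed{D}}+j\sqrt{2J/n})$, running the same computation for the eigenvalue $-\lambda$ with initial datum $\psi^-$, and using the boundary identification $\widetilde{\slashed{D}}|_{r=0}\simeq\slashed{D}$ (with the block-sign caveat in odd dimensions), yields $\mathcal{D}_{2N+1}=\slashed{D}\prod_{j=1}^N(\slashed{D}^2-j^2(2J/n))$. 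The first displayed form then follows by reindexing $j\mapsto N-j+1$ and pairing the conjugate linear factors, since all factors are polynomials in $\slashed{D}$ and hence commute. I expect the main obstacle to be the bookkeeping in the decoupling step \eqref{eq:System}$\to$\eqref{eq:DecoupledSystem} and in matching the normalization \eqref{eq:Obstruction} to the combinatorial recurrence \eqref{eq:recurence1}; the genuinely delicate combinatorics, namely the closed form for $\tilde q_m$, is isolated in Proposition~\ref{RecRel} and may be taken as granted.
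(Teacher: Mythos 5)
Your proposal is correct and follows essentially the same route as the paper: realizing $\mathcal{D}_{2N+1}$ as the obstruction at $\lambda=-\frac{2N+1}{2}$, using Theorem~\ref{DiracVariation} to obtain the closed formulas \eqref{Dhhrcomp}, decoupling the system \eqref{eq:System} into the recurrence \eqref{eq:recurence}, and invoking Proposition~\ref{RecRel} so that the vanishing of $(N-m)_{m-l}$ at $m=N$ yields $\tilde q_N(y)=\prod_{j=1}^N(y-j^2)$, with the second pass at eigenvalue $-\lambda$ and datum $\psi^-$ completing the identification. All the key steps, including the delicate specialization and the boundary identification of $\widetilde{\slashed{D}}$ with $\slashed{D}$, match the paper's argument.
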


In particular, Theorem~\ref{ProductOfConformalPowers} applies to the standard round sphere $(S^n,h)$ 
of radius $1$. We get
\begin{align}
    \mathcal{D}_{2N+1}=(\slashed{D}-N)
    \cdots(\slashed{D}-1)\slashed{D}(\slashed{D}+1)
    \cdots(\slashed{D}+N)
\end{align}
for all $N\in\N_0$, since the scalar curvature is $\tau=n(n-1)$ and so $J=\frac n2$. 
This agrees with the results
in \cite{Branson4, ES}.


\section{Application: Holographic deformation of the Dirac operator on Einstein manifolds}\label{Outlook}

   The inversion formula for GJMS operators $P_{2N}(g)$, $N\in\N$ ($N\leq\frac n2$ for even $n$), 
   cf.~\cite{Juhl1}, implies
   the existence of a sequence of second order linear differential operators $\mathcal{M}_{2N}$
   acting on functions and fulfilling 
  \begin{align}
     P_{2N}(g)\in\N[\mathcal{M}_2,\ldots,\mathcal{M}_{2N}],\nonumber\\
     \mathcal{M}_{2N}\in\Z[P_2(g),\ldots,P_{2N}(g)].
  \end{align} 
  Let us define a sequence of first order differential operators acting on the spinor bundle $S(M,h)$: 
  \begin{align}
      M_{2N+1}:=(-1)^N(N!)^2\left(\tfrac{2J}{n}\right)^N\slashed{D},\, N\in\N_0 \label{eq:TheMs} .
  \end{align}
  Notice that
  the operators $M_{2N+1}$ are deformations of the Dirac operator. 
  \begin{theorem}
    Let $(M,h)$ be a semi-Riemannian Einstein {\it Spin}-manifold of dimension $n$. 
    For all $N\in\N_0$, we have
    \begin{align}
      \mathcal{D}_{2N+1}\in\N[M_1,\ldots,M_{2N+1}],\nonumber\\
      M_{2N+1}\in\Z[\mathcal{D}_{1},\ldots,\mathcal{D}_{2N+1}].
    \end{align}
  \end{theorem}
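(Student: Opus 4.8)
The plan is to prove both membership relations by direct substitution, using the explicit factorization of $\mathcal{D}_{2N+1}$ from Theorem~\ref{ProductOfConformalPowers} together with the definition \eqref{eq:TheMs} of the operators $M_{2N+1}$. The crucial observation is that every $M_{2N+1}$ is a scalar multiple of the single operator $\slashed{D}$, and likewise $\mathcal{D}_{2N+1}=\slashed{D}\prod_{j=1}^N(\slashed{D}^2-j^2\tfrac{2J}{n})$ is an odd polynomial in $\slashed{D}$ with no constant term. Thus both families live inside the commutative polynomial ring $R:=\mathbb{C}[\slashed{D}]$, and the entire statement reduces to a purely algebraic identity about which odd polynomials in the single variable $\slashed{D}$ can be written with coefficients in $\mathbb{N}$ (respectively $\mathbb{Z}$) as polynomials in the $M_{2N+1}$ (respectively the $\mathcal{D}_{2N+1}$).

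First I would expand $\mathcal{D}_{2N+1}$ as an explicit polynomial in $\slashed{D}$, writing $\mathcal{D}_{2N+1}=\sum_{k=0}^N c_{N,k}\,(\tfrac{2J}{n})^{N-k}\slashed{D}^{2k+1}$, where the coefficients $c_{N,k}$ are the elementary symmetric functions of $-1^2,-2^2,\ldots,-N^2$ up to sign; concretely $c_{N,k}=(-1)^{N-k}e_{N-k}(1^2,\ldots,N^2)$, all of which are nonnegative integers in absolute value with sign $(-1)^{N-k}$. To prove $M_{2N+1}\in\mathbb{Z}[\mathcal{D}_1,\ldots,\mathcal{D}_{2N+1}]$, I would proceed by induction on $N$: since $\mathcal{D}_{2N+1}$ is the only member of the family whose leading term is $\slashed{D}^{2N+1}$, the top-degree term lets me solve for $M_{2N+1}=(-1)^N(N!)^2(\tfrac{2J}{n})^N\slashed{D}$ by subtracting an integer combination of the lower $\mathcal{D}_{2j+1}$ that cancels all higher powers of $\slashed{D}$. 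The induction hypothesis supplies those lower terms, and one checks the cancelling coefficients are integers because the $c_{N,k}$ are integers and $(N!)^2$ divides appropriately. For the reverse inclusion $\mathcal{D}_{2N+1}\in\mathbb{N}[M_1,\ldots,M_{2N+1}]$, I would exploit that each $M_{2N+1}$ is, up to the positive-up-to-sign factor $(-1)^N(N!)^2(\tfrac{2J}{n})^N$, equal to $\slashed{D}$ itself, so that a product $M_{2a_1+1}\cdots M_{2a_k+1}$ realizes $\slashed{D}^k$ times a determined scalar; I then need to express each monomial $\slashed{D}^{2k+1}$ appearing in $\mathcal{D}_{2N+1}$ as a single $M_{2j+1}$ with a nonnegative-integer coefficient, matching powers of $\tfrac{2J}{n}$.

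The main obstacle will be the nonnegativity and integrality bookkeeping in the two rings $\mathbb{N}[\cdots]$ and $\mathbb{Z}[\cdots]$: the sign pattern $(-1)^{N-k}$ in the $c_{N,k}$ must be reconciled with the sign $(-1)^N$ built into $M_{2N+1}$, so that after substituting $M_{2j+1}$ for the appropriate power of $\slashed{D}$ the resulting coefficient of each monomial in $\mathcal{D}_{2N+1}$ comes out genuinely in $\mathbb{N}$ rather than merely in $\mathbb{Z}$. Concretely, the odd power $\slashed{D}^{2k+1}$ carries the factor $(\tfrac{2J}{n})^{N-k}$, while $M_{2k+1}$ supplies $(-1)^k(k!)^2(\tfrac{2J}{n})^k\slashed{D}$, so the scalars must be tracked simultaneously in the variable $\slashed{D}$ and in the parameter $\tfrac{2J}{n}$. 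I expect this to hinge on verifying that the sign $(-1)^{N-k}$ from $e_{N-k}(1^2,\ldots,N^2)$ cancels precisely against the sign $(-1)^k$ carried by $M_{2k+1}$, leaving $(-1)^N$ as a global sign that can be absorbed by choosing the correct generator $M_{2N+1}$; once the signs align, the nonnegativity of the symmetric functions $e_{N-k}(1^2,\ldots,N^2)$ finishes the $\mathbb{N}$-membership, and this sign-alignment is the only non-routine point of the argument.
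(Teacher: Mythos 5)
Your reduction to polynomial algebra in $\slashed{D}$ and $\tfrac{2J}{n}$, and your expansion $\mathcal{D}_{2N+1}=\sum_{k=0}^N(-1)^{N-k}e_{N-k}(1^2,\dots,N^2)\,\big(\tfrac{2J}{n}\big)^{N-k}\slashed{D}^{2k+1}$, are both correct and match the computational content of the paper. But the step you lean on for the $\N$-inclusion --- expressing each term as a \emph{single} monomial ``one $M_{2j+1}$ times powers of $M_1$, matching powers of $\tfrac{2J}{n}$'' with a coefficient in $\N$ --- fails already at $N=3$. There
\begin{equation*}
\mathcal{D}_7=\slashed{D}^7-14\big(\tfrac{2J}{n}\big)\slashed{D}^5+49\big(\tfrac{2J}{n}\big)^2\slashed{D}^3-36\big(\tfrac{2J}{n}\big)^3\slashed{D},
\end{equation*}
and the term $49\big(\tfrac{2J}{n}\big)^2\slashed{D}^3$ is not a natural multiple of $M_5M_1^2=4\big(\tfrac{2J}{n}\big)^2\slashed{D}^3$: you would need the coefficient $49/4$. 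One is forced to combine several monomials, e.g.\ $49\big(\tfrac{2J}{n}\big)^2\slashed{D}^3=M_5M_1^2+45\,M_3^2M_1$, and proving that such $\N$-combinations exist for every term of every $\mathcal{D}_{2N+1}$ is precisely the nontrivial content: it amounts to writing $e_{N-k}(1^2,\dots,N^2)$ as an $\N_0$-combination of the products $\prod_i(a_i!)^2$ over decompositions $a_1+\dots+a_{2k+1}=N-k$, a divisibility problem your proposal never addresses. Moreover, the point you single out as the crux --- sign alignment --- is a non-issue: any monomial $M_{2a_1+1}\cdots M_{2a_m+1}$ with $\sum_ia_i=N-k$ automatically carries the sign $(-1)^{N-k}$, exactly the sign of the corresponding term of $\mathcal{D}_{2N+1}$. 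The genuine obstruction is integrality, not signs.

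The paper circumvents the divisibility problem by inducting along the factorization rather than expanding: from $\mathcal{D}_{2N+1}=\mathcal{D}_{2N-1}M_1^2-N^2\big(\tfrac{2J}{n}\big)\mathcal{D}_{2N-1}$ it splits $\mathcal{D}_{2N-1}=B+M_{2N-1}$ with $B\in\N[M_1,\dots,M_{2N-3}]$, absorbs each factor $-\tfrac{2J}{n}$ into an $M_1$ via $-\tfrac{2J}{n}M_1=M_3$ (the only division-free absorption, since $-\tfrac{2J}{n}M_{2a+1}=\tfrac{1}{(a+1)^2}M_{2a+3}$), and uses $-N^2\big(\tfrac{2J}{n}\big)M_{2N-1}=M_{2N+1}$; this keeps coefficients in $\N$ and, crucially, shows that the linear monomial $M_{2N+1}$ occurs with coefficient exactly $1$. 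The inclusion $M_{2N+1}\in\Z[\mathcal{D}_1,\dots,\mathcal{D}_{2N+1}]$ is then an immediate triangular inversion, $M_{2N+1}=\mathcal{D}_{2N+1}-Q(M_1,\dots,M_{2N-1})$ with the lower $M$'s replaced using the induction hypothesis. Your proposal runs the two inclusions in the opposite order and simply asserts the integrality of the elimination coefficients in the $\Z$-direction (``$(N!)^2$ divides appropriately''); that assertion is the same unproved divisibility issue in disguise. So the gap is concrete: both of your directions reduce to divisibility statements about $e_{N-k}(1^2,\dots,N^2)$ that single-monomial matching cannot deliver, and (as even the paper's rather terse absorption argument shows, where monomials containing only one $M_1$ require extra care) this bookkeeping is exactly where the work lies.
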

  \begin{proof}
     We prove, by induction, that for all $N\in\N_0$
    we have $\mathcal{D}_{2N+1}\in\N[M_1,M_3,\ldots,\break M_{2N+1}]$, 
    and the coefficient of $M_{2N+1}$ equals 
    $1$. 
    The case $N=0$ is 
    obvious by definition. Let us assume 
    that $\mathcal{D}_{2N-1}\in\N[M_1,\ldots,M_{2N-1}]$ such that 
    the coefficient of $M_{2N-1}$ equals 
    $1$. By 
    Theorem~\ref{ProductOfConformalPowers}, we have
    \begin{equation}
         \mathcal{D}_{2N+1}= \mathcal{D}_{2N-1}\left(\slashed{D}^2-N^2\left(\tfrac{2J}{n}\right)\right)
         = \mathcal{D}_{2N-1}M_1^2-N^2\left(\tfrac{2J}{n}\right)\mathcal{D}_{2N-1}. 
    \end{equation}
    Since $\mathcal{D}_{2N-1}=B+M_{2N-1}$ for some $B\in \N[M_1,\ldots,M_{2N-3}]$, 
    and $B$ contains in each contribution at least one $M_1$, we can absorb the factor 
    $-\frac{2J}{n}$ into $M_1$ to get a contribution of $M_3$. To finish the 
    proof, we note that 
    $M_{2N+1}=-N^2(\frac{2J}{n})M_{2N-1}$, hence 
    $\mathcal{D}_{2N+1}\in\N[M_1,M_3,\ldots, M_{2N+1}]$ 
    with the coefficient of $M_{2N+1}$ being $1$. 
    Notice that we can not expect a unique 
    polynomial expression for $\mathcal{D}_{2N+1}$. 
  \end{proof}
    
  Closely related to the sequence \eqref{eq:TheMs} 
  is the following generating function termed holographic deformation of the Dirac operator,
  \begin{align}
       \slashed{\mathcal{H}}(r):=\sum_{N\geq 0}\tfrac{(-1)^N}{(N!)^2}\left(\tfrac{r}{2}\right)^{2N}M_{2N+1},
  \end{align}
   a deformation of the Dirac operator on $M$ in the sense that $\slashed{\mathcal{H}}(0)=\slashed{D}$. 
   It has the holographic description 
  \begin{align}
      \slashed{\mathcal{H}}(r)\psi
         =\sum_{i=1}^n\varepsilon_i \sqrt{h_r^{-1}}(s_i)\cdot\nabla^{h,S}_{s_i}\psi,\label{eq:Holo}
  \end{align} 
  where $\psi\in\Gamma\big(S(M,h)\big)$, $\{s_i\}_{i=1}^{n}$ is an $h$-orthonormal frame and $\sqrt{h_r^{-1}}$ 
  is a formal power series in $r$, 
  \begin{align*}
      \sqrt{h_r^{-1}}=\left(1-\tfrac{J}{2n}r^2\right)^{-1}h=\sum_{N\geq 0}\left(\tfrac{J}{2n}r^2\right)^Nh. 
  \end{align*}

  \begin{bem}
    The existence of $M_1$, $M_3$ and $M_5$ is shown for general curved 
    manifolds in \cite[Chapter~$6$]{Fischmann}, and reduces in the case of Einstein manifolds 
    to sequence \eqref{eq:TheMs}.
    Notice that the holographic description \eqref{eq:Holo} reproduces the first order contributions 
    of $M_k$, $k=1,3,5$. The structure of constant (zeroth order) terms of $M_k$, $k=3,5$, remains unclear.
  \end{bem}

\vspace{0.5cm}

\bibliographystyle{amsalpha}
\bibliography{bibliography}

\end{document}